\newtheorem{theorem}{Theorem}[section]
\newtheorem{lemma}[theorem]{Lemma}
\newtheorem{corollary}[theorem]{Corollary}
\newtheorem{proposition}[theorem]{Proposition}
\newtheorem{observation}[theorem]{Observation}
\theoremstyle{definition}
\newcommand{\po}{$1$-p.o.}
\title{$1$-perfectly orientable graphs and graph products \thanks{This work was supported in part by the Slovenian Research Agency (I$0$-$0035$, research programs P$1$-$0285$, research projects N$1$-$0032$, J$1$-$5433$, J$1$-$6720$, J$1$-$6743$, J$1$-$7051$, and a Young Researchers Grant).}}
\date{\today}
\author{
Tatiana Romina Hartinger\\
\small University of Primorska, UP IAM, Muzejski trg 2, SI6000 Koper, Slovenia\\
\small University of Primorska, UP FAMNIT, Glagolja\v ska 8, SI6000 Koper, Slovenia\\
\small \texttt{tatiana.hartinger@iam.upr.si}
\and
Martin Milani\v c\\
\small University of Primorska, UP IAM, Muzejski trg 2, SI6000 Koper, Slovenia\\
\small University of Primorska, UP FAMNIT, Glagolja\v ska 8, SI6000 Koper, Slovenia\\
\small \texttt{martin.milanic@upr.si}
}
\begin{document}
\maketitle

\begin{abstract}
A graph $G$ is said to be \emph{1-perfectly orientable} (\po~for short) if it admits an orientation such that the out-neighborhood of every
vertex is a clique in $G$. The class of \po~graphs forms a common generalization of the classes of chordal and circular arc graphs.
Even though \hbox{\po~graphs} can be recognized in polynomial time, no structural characterization of \po~graphs is known.
In this paper we consider the four standard graph products: the Cartesian product, the strong product, the direct product, and the lexicographic
product. For each of them, we characterize when a nontrivial product of two graphs is \po
\end{abstract}

%---------------------------------------------------------------------------------------------------------------------------------

\section{Introduction}

A \emph{tournament} is an orientation of a complete graph. We study graphs having an orientation that is an {\it out-tournament}, that is, a
digraph in which the out-neighborhood of every vertex induces a \emph{tournament}. ({\it In-tournaments} are defined similarly.) Following
the terminology of Kammer and Tholey~\cite{MR3152051}, we say that an orientation of a graph is {\it $1$-perfect} if it is an out-tournament,
and that a graph is {\it $1$-perfectly orientable} ({\it $1$-p.o.}~for short) if it has  a $1$-perfect orientation. In~\cite{MR3152051}, Kammer and
Tholey introduced the more general concept of \emph{$k$-perfectly orientable} graphs, as graphs admitting an orientation in which the
out-neighborhood of each vertex can be partitioned into at most $k$ sets each inducing a tournament. They developed several approximation algorithms for
optimization problems on $k$-perfectly orientable graphs and related classes. It is easy to see (simply by reversing the arcs) that \po~graphs are exactly the graphs that admit an orientation that is an in-tournament. In-tournament orientations were called {\it fraternal orientations} in several papers~\cite{MR1246675,MR1449722,MR1292980,MR1287025,MR2323998,
MR2548660, MR1161986}.

The concept of \po~graphs was introduced in 1982 by Skrien~\cite{MR666799} (under the name {\it $\{B_2\}$-graphs}), where the problem of characterizing
\po~graphs was posed. While a structural understanding of \po~graphs is still an open question, partial results are known.
Bang-Jensen et al.~observed in~\cite{MR1244934} that \po~graphs can be recognized in polynomial time via a reduction to $2$-SAT. Skrien~\cite{MR666799}
characterized graphs admitting an orientation that is both an in-tournament and an out-tournament as exactly the proper circular arc graphs. All chordal
graphs and all circular arc graphs are $1$-p.o.~\cite{MR1161986}, and, more generally, so is any vertex-intersection graph of connected induced subgraphs of a unicyclic graph~\cite{MR1244934,Prisner-thesis}. Every graph having a unique induced cycle of order at least $4$~is
$1$-p.o.~\cite{MR1244934}.

In~\cite{HM2016}, several operations preserving the class of \po~graphs were described (see Sec.~\ref{sec:prelim}); operations that do not preserve the property in general were also considered. In the same paper \po~graphs were characterized in terms of edge-clique covers, and
characterizations of \po~cographs and of \po~co-bipartite graphs were given. In particular, a cograph is \po~if and only if it is $K_{2,3}$-free and a co-bipartite graph is \po~if and only if it is circular arc. A structural characterization of line graphs that are~\po~was given in~\cite{MR1244934}.

In this paper we consider the four standard graph products: the Cartesian product, the strong product, the direct product, and the lexicographic
product. For each of these four products, we completely characterize when a nontrivial product of two graphs $G$ and $H$ is $1$-p.o.
While the results for the Cartesian, the lexicographic, and the direct products turn out to be rather straightforward,
the characterization for the case of the strong product is more involved.

Some common features of the structure of the factors involved in the characterizations can be described as follows.
In the cases of the Cartesian and the direct product the factors turn out to be very sparse and very restricted, always
having components with at most one cycle. In the case of the lexicographic and of the strong product
the factors can be dense. More specifically, co-bipartite \hbox{\po~graphs}, including co-chain graphs in the case of strong products, play an important role in these characterizations. The case of the strong product also leads to a new infinite family of \po~graphs (cf.~Proposition~\ref{lemm:orient-rafts}).

The paper is organised as follows. Section 2 includes the basic definitions and notation, and recalls several known results about \po~graphs that will be required
for some of the proofs. In Sections 3, 4, 5, and 6 we deal, respectively, with \po~Cartesian product graphs, \po~lexicographic product graphs, \po~direct product
graphs, and \po~strong product graphs, and state and prove the corresponding characterizations.

%---------------------------------------------------------------------------------------------------------------------------------

\section{Preliminaries}\label{sec:prelim}

All graphs considered in this paper are simple and finite, but may be either undirected or directed (in which case we refer to them as digraphs).
An edge in a graph connecting vertices $u$ and $v$ will be denoted simply $uv$. The {\it neighborhood} of a vertex $v$ in a graph $G$
is the set of all vertices adjacent to $v$ and will be denoted by $N_{G}(v)$. The {\it degree} of $v$ is the size of its neighborhood. A {\it leaf} in a graph is a vertex of degree~$1$. The {\it closed neighborhood} of $v$ in $G$ is the set $N_G(v)\cup \{v\}$, denoted by $N_G[v]$.
An \emph{orientation} of a graph $G = (V, E)$ is a digraph $D= (V,A)$ obtained by assigning  a direction to each edge of $G$.
Given a digraph $D=(V,A)$, the \emph{in-neighborhood} of a vertex $v$ in $D$, denoted by $N^{-}_{D}(v)$, is the set of all vertices $w$ such
that
$(w,v) \in A$. Similarly, the \emph{out-neighborhood} of $v$ in $D$ is the set $N^{+}_{D}(v)$ of all vertices $w$ such that
$(v,w) \in A$. We may omit the subscripts when the corresponding graph or digraph is clear from the context.
Given an undirected graph $G$ and a set $S\subseteq V(G)$, we define the {\it neighborhood of $S$} as
$N(S)= (\bigcup_{x \in S}{N(x)}) \setminus S$. The {\it subgraph of $G$ induced by $S$} is the graph, denoted by $G[S]$,
with vertex set $S$ and edge set $\{uv: u\in S, v\in S, uv\in E(G)\}$. The {\it distance} between two vertices $x$ and $y$ in a connected graph $G$
will be denoted by $d_G(x,y)$ (or simply $d(x,y)$) and defined, as usual, as the length of a shortest $x$-$y$ path.

Given two graphs $G$ and $H$, their {\it union} is the graph $G\cup H$ with vertex set $V(G)\cup V(H)$ and
edge set $E(G)\cup E(H)$. Their {\it disjoint union} is the graph $G+H$ with vertex set $V(G)\;\dot{\cup}\;V(H)$ (disjoint union) and
edge set $E(G)\cup E(H)$ (if $G$ and $H$ are not vertex disjoint, we first replace one of them with a disjoint isomorphic copy). We write $2G$ for $G+G$. The {\it join} of two graphs $G$ and $H$ is the graph denoted by $G\ast H$ and obtained from the
disjoint union of $G$ and $H$ by adding to it all edges joining a vertex of $G$ with a vertex of $H$. Given two graphs $G$ and $H$ and a vertex
$v$ of $G$, the \emph{substitution} of $v$ in $G$ for $H$ consists in replacing $v$ with $H$ and making each vertex of $H$ adjacent to every
vertex in $N_{G}(v)$ in the new graph.

A {\it clique} (resp.,~{\it independent set}) in a graph $G$ is a set of pairwise adjacent (resp.,~non-adjacent) vertices of $G$.
The {\it complement} of a graph $G$ is the graph $\overline{G}$ with the same vertex set as $G$ in which two distinct vertices are adjacent
if and only if they are not adjacent in $G$. The fact that two graphs $G$ and $H$ are isomorphic to each other will be denoted by
$G\cong H$. Given a family ${\cal F}$ of graphs, we say that a graph is \emph{${\cal F}$-free} if it has no induced subgraph isomorphic to a
graph of ${\cal F}$.

$K_n$, $C_n$ and $P_n$ denote the $n$-vertex complete graph, cycle, and path, respectively. The \emph{claw} is the complete bipartite graph
$K_{1,3}$, that is, a star with $3$ edges, $3$ leaves and one central vertex. The \emph{bull} is a graph with $5$ vertices and $5$ edges, consisting of a triangle
with two disjoint pendant edges. The \emph{gem} is the graph $P_4 \ast K_1$, that is, the $5$-vertex graph consisting of a $4$-vertex path plus a vertex
adjacent
to each vertex of the path.

For graph theoretic notions not defined above, see, e.g.~\cite{opac-b1096791}. We will recall the definitions and some basic facts about each of the four graph products studied in the respective sections (Sec.~$3$--$7$). For each of the four considered products, we say that the product
of two graphs is \emph{nontrivial} if both factors have at least $2$ vertices.
For further details regarding product graphs and their properties, we refer to~\cite{opac-b1132990,MR1788124}.

In~\cite{HM2016}, several results about \po~graphs were proved. In the rest of this section we list some of them for later use.

\begin{proposition}\label{prop:non-1-po}
No graph in the set $\{F_1,F_2,F_3,F_4\}$ (see Figure~\ref{fig:1}) is $1$-perfectly orientable.
\end{proposition}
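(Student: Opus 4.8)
The plan is to argue by contradiction for each of the four graphs separately, using the elementary constraint that a $1$-perfect orientation imposes at a vertex with non-adjacent neighbors. Concretely: if $D$ is a $1$-perfect orientation of a graph $G$ and a vertex $v$ has two neighbors $u,w$ with $uw\notin E(G)$, then $u$ and $w$ cannot both belong to $N^+_D(v)$ (else $N^+_D(v)$ is not a clique), so at least one of the edges $uv$, $vw$ points towards $v$ in $D$. Equivalently, writing things as the $2$-SAT instance with one Boolean variable per edge recording its direction, every induced $P_3$ of $G$ contributes a $2$-clause forbidding the ``both edges out of the centre'' assignment, so showing $F_i$ is not \po\ amounts to showing the corresponding instance is unsatisfiable.

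For the more symmetric or sparser members of $\{F_1,F_2,F_3,F_4\}$ I would finish by a global double-counting argument: the constraint above says that a vertex whose neighborhood contains an independent set of size $k$ has out-degree at most $1$, hence in-degree at least $k-1$; summing these lower bounds on in-degrees over a suitable set of vertices and comparing with $\sum_{v}d^-_D(v)=|E(F_i)|$ yields a contradiction. (For instance, this is exactly why $K_{2,3}$ is not \po: its two degree-$3$ vertices each absorb at least two arcs and its three degree-$2$ vertices each absorb at least one, forcing at least $7$ arcs into a graph with only $6$ edges.) For a member of the family for which this crude bound is not tight enough, I would instead run a local propagation: pick an edge incident to a vertex of large degree whose neighborhood is sparse, branch on its two possible directions, and in each branch repeatedly apply the constraint above to force the directions of further edges until some out-neighborhood is forced to contain a non-edge.

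The main obstacle I anticipate is exactly this last point: some of $F_1,\dots,F_4$ are presumably not bipartite and have neighborhoods that are a clique plus a little rather than independent sets, so the pure counting argument underestimates the slack. For those graphs the proof reduces to a finite but slightly delicate case check, and the real work is in choosing the branching edge so that the forced propagation is short and covers both cases symmetrically; once the right edge is identified, each case closes in a couple of steps. Since every $F_i$ has only a handful of vertices and edges, the entire analysis is elementary and terminates quickly.
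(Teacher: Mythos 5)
First, a point of reference: the paper does not prove Proposition~\ref{prop:non-1-po} at all --- it is imported from~\cite{HM2016} as a known result --- so there is no in-paper argument to compare yours against line by line. Judged on its own merits, your central observation is correct and does most of the work: in a $1$-perfect orientation $D$, a vertex $v$ whose neighborhood contains an independent set of size $k$ can send at most one arc into that set, hence $d^-_D(v)\ge k-1$, and summing against $\sum_v d^-_D(v)=|E|$ rules out $K_{2,3}$ exactly as you compute ($2+2+1+1+1=7>6$). The same count kills the domino $F_1$ (its two degree-$3$ vertices have independent neighborhoods, so the in-degrees sum to at least $2\cdot 2+4\cdot 1=8>7$), and more generally it shows that no triangle-free graph with more edges than vertices is $1$-perfectly orientable, since then $|E|=\sum_v d^-_D(v)\ge\sum_v(d(v)-1)=2|E|-|V|$. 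This also disposes of $F_2$, which is triangle-free (it occurs as an induced subgraph of the triangle-free graph $P_3\times P_5$) and has more edges than vertices.

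The genuine gap is $F_3=\overline{C_6}$, i.e.\ the triangular prism $K_3\Box K_2$. There every neighborhood induces $K_2+K_1$, so your bound gives only $d^-_D(v)\ge 1$ per vertex, i.e.\ $6\le 9$, and no contradiction; you anticipate this and promise a branch-and-propagate argument, but you never choose the branching edge or run the propagation, so for this graph the proposal is a plan rather than a proof. For the record, it does close in a few forced steps: write the prism with triangles $\{1,3,5\}$ and $\{2,4,6\}$ and matching edges $14$, $25$, $36$; by the automorphism exchanging the two triangles we may assume $1\to 4$. Since $4$ is adjacent to neither $3$ nor $5$, both $3\to 1$ and $5\to 1$ are forced; since $1$ is adjacent to neither $6$ nor $2$, having $1\in N^+(3)$ and $1\in N^+(5)$ forces $6\to 3$ and $2\to 5$; then $3\in N^+(6)$ forces $2\to 6$ and $4\to 6$; now $N^+(2)\supseteq\{5,6\}$, which is a non-edge --- a contradiction. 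You need to include this (or an equivalent finite check) explicitly; without it the statement is established for only three of the four graphs.
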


 \begin{figure}[h!]
  \centering
\includegraphics[width=0.75\textwidth]{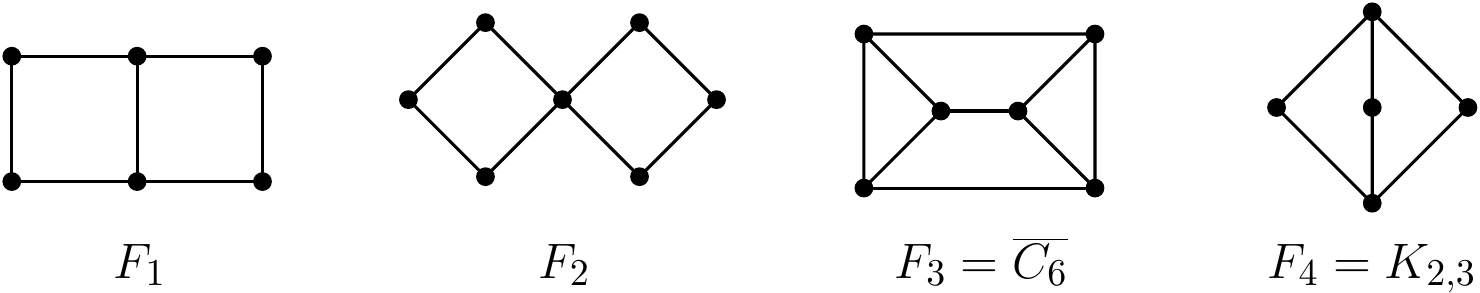}
\caption{Four small non-\po~graphs.}\label{fig:1}
\end{figure}

Two distinct vertices $u$ and $v$ in a graph $G$ are said to be \emph{true twins} if $N_{G}[u] = N_{G}[v]$.
We say that a vertex $v$ in a graph $G$ is \emph{simplicial} if its neighborhood forms a clique and \emph{universal} if it is adjacent to all other
vertices of the graph, that is, if $N_{G}[v]=V(G)$.
The operations of adding a true twin, a universal vertex, or a simplicial vertex to a given graph are defined in the obvious way.

\begin{proposition}\label{prop:operations}
The class of \po~graphs is closed under each of the following operations:
\begin{enumerate}[(a)]
  \item Disjoint union.
  \item Adding a true twin.
  \item Adding a universal vertex.
  \item Adding a simplicial vertex.
\end{enumerate}
\end{proposition}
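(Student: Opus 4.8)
The plan is to verify each of the four closure properties by exhibiting an explicit $1$-perfect orientation of the enlarged graph, starting from a $1$-perfect orientation of the given graph. In each case we take a graph $G$ with a $1$-perfect orientation $D$, and we extend $D$ to the new graph by choosing how to orient the newly added edges, then check that the out-neighborhood of every vertex remains a clique.

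For (a), disjoint union: if $D_G$ and $D_H$ are $1$-perfect orientations of $G$ and $H$, their disjoint union $D_G + D_H$ is an orientation of $G + H$, and since no new edges are added and no vertex gains new out-neighbors, the out-neighborhood of each vertex is unchanged and hence still a clique. For (c), adding a universal vertex $v$ to $G$: orient every new edge from the old vertices toward $v$, so $N^+(v) = \emptyset$ (a clique vacuously) and each old vertex $u$ gains $v$ as an in-neighbor only; thus $N^+(u)$ is unchanged, still a clique. For (d), adding a simplicial vertex $v$ with $N(v) = C$ a clique: orient all new edges out of $v$, so $N^+(v) = C$ is a clique by hypothesis, while every old vertex gains $v$ only as an in-neighbor, leaving its out-neighborhood unchanged.

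The main point requiring a little care is (b), adding a true twin $v$ of an existing vertex $u$ (so $N_G[u] = N_G[v]$, in particular $uv$ is an edge and $v$ is adjacent to $u$ and to every neighbor of $u$). The idea is to make $v$ mimic $u$: take a $1$-perfect orientation $D$ of $G$, orient the edge $uv$ arbitrarily, say from $v$ to $u$, and for each $w \in N_G(u)$ orient the edge $vw$ in the same direction as the edge $uw$ is oriented in $D$. Then $N^+(v) = (N^+(u) \cap N_G(u)) \cup \{u\}$; since $u$ and all vertices of $N^+(u)$ lie in the clique $N_G[u] = N_G[v]$ and $N^+(u)$ is a clique, $N^+(v)$ is a clique. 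For an old vertex $w \ne u$: if $w \notin N_G(u)$ then $w$ is not adjacent to $v$ and nothing changes; if $w \in N_G(u)$ and the edge $uw$ was oriented $u \to w$ in $D$, then $w$ did not have $u$ in its out-neighborhood and now also has $v$ pointing into it, so $N^+(w)$ is unchanged; if $uw$ was oriented $w \to u$ in $D$, then $u \in N^+(w)$ already, and we add $v$ to $N^+(w)$, so we must check $N^+(w) \cup \{v\}$ is still a clique — but $v$ is a true twin of $u$, so $v$ is adjacent to every vertex adjacent to $u$, and in particular to every vertex of the clique $N^+(w)$ (each of which is adjacent to $u \in N^+(w)$), so $N^+(w) \cup \{v\}$ is indeed a clique. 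Finally, $N^+(u)$ is unchanged except possibly for the edge $uv$, which we oriented toward $u$, so $N^+(u)$ is still a clique.

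There is essentially no genuine obstacle here; the only subtlety is the verification in case (b) that when we add $v$ to the out-neighborhood of some $w$, the resulting set is still a clique, which follows precisely from the defining property of a true twin. I would present the argument as four short paragraphs, one per item, with (b) getting the most attention.
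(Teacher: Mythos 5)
Your overall approach --- extend a given $1$-perfect orientation explicitly and check out-neighborhoods case by case --- is the standard one. Note that the paper itself states Proposition~\ref{prop:operations} without proof, importing it from~\cite{HM2016}, so there is no in-paper argument to compare against; your direct verification is exactly the kind of argument one would expect there. Parts (a), (b) and (d) are correct as written, and (b), the only part with real content, is handled properly: orienting $uv$ as $v\to u$ and copying the orientation of $uw$ onto $vw$ gives $N^+(v)=N^+(u)\cup\{u\}$, and your check that $N^+(w)\cup\{v\}$ remains a clique when $u\in N^+(w)$ is the right use of the true-twin hypothesis.

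There is, however, a slip in (c). With the orientation you choose --- every new edge directed \emph{from} the old vertices \emph{toward} $v$, which is the correct choice, since the reverse would force $N^+(v)=V(G)$ to be a clique --- each old vertex $u$ acquires $v$ as an \emph{out}-neighbor, not an in-neighbor: the arc $u\to v$ puts $v$ into $N^+(u)$. So the claim that $N^+(u)$ is unchanged is false, and the verification as written does not go through. The conclusion is still correct, but it needs the same one-line argument you already use in (b): $N^+(u)=N^+_D(u)\cup\{v\}$, and since $v$ is universal it is adjacent to every vertex of the clique $N^+_D(u)$, so the enlarged set is again a clique. With that sentence inserted, all four parts are complete.
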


Recall that a graph $H$ is said to be an {\it induced minor} of a graph $G$ if $H$ can be obtained from $G$ by a (possibly empty) sequence of
vertex deletions and edge contractions, where {\it contracting an edge} $uv$ in a graph $G$ means deleting its endpoints and adding a new vertex
adjacent exactly to vertices in $N_{G}(u) \cup N_{G}(v)$.

\begin{proposition}\label{prop:minors}
If $G$ is \po~and $H$ is an induced minor of $G$, then $H$ is \po
\end{proposition}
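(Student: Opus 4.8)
The plan is to reduce the statement to the two elementary operations from which every induced minor is built. Since, by definition, $H$ is obtained from $G$ by a finite sequence of vertex deletions and edge contractions, it suffices, by induction on the length of the sequence, to prove that the class of \po~graphs is closed under (i) deleting a vertex and (ii) contracting a single edge. Claim (i) is immediate: if $D$ is a $1$-perfect orientation of $G$ and $x\in V(G)$, then deleting $x$ from $D$ yields a $1$-perfect orientation of $G-x$, since the out-neighborhood of each remaining vertex loses at most the vertex $x$ and therefore stays a clique. All the work is in (ii).

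So fix a $1$-perfect orientation $D$ of $G$ and an edge $uv\in E(G)$, let $H$ be obtained from $G$ by contracting $uv$, and let $w$ be the new vertex, so that $N_H(w)=(N_G(u)\cup N_G(v))\setminus\{u,v\}$. We may assume the arc of $D$ joining $u$ and $v$ is $(u,v)$, that is, $v\in N^+_D(u)$. The key observation is that, since $N^+_D(u)$ is a clique containing $v$, every out-neighbor of $u$ is adjacent to $v$ or equal to $v$; that is, $N^+_D(u)\subseteq N_G[v]$.

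I would then orient $H$ as follows. Every edge of $H$ not incident with $w$ is an edge of $G$ with both endpoints different from $u$ and $v$, and I keep its orientation from $D$. For an edge $wy$ of $H$: if $y$ is adjacent to $v$ in $G$, orient $wy$ the way $vy$ is oriented in $D$; otherwise $y$ is adjacent in $G$ to $u$ and not to $v$, and I orient $wy$ the way $uy$ is oriented in $D$ --- and here the arc is necessarily $(y,u)$, for $(u,y)$ would force $y\in N^+_D(u)\subseteq N_G[v]$, contradicting $y\notin N_G[v]$. Hence every edge of the second kind is directed towards $w$, and therefore $N^+_{D'}(w)=N^+_D(v)$ (using $u\notin N^+_D(v)$ and $N^+_D(v)\subseteq N_G(v)$), which is a clique. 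For a vertex $y\ne w$ one then checks that $N^+_{D'}(y)$ is a clique by distinguishing whether $y$ is adjacent to $w$ in $H$ and, if so, the direction of $wy$ in $D'$: when $w\notin N^+_{D'}(y)$ the set $N^+_{D'}(y)$ is contained in the clique $N^+_D(y)$ and consists of vertices of $H$, while when $w\in N^+_{D'}(y)$ one gets $N^+_{D'}(y)=(N^+_D(y)\setminus\{u,v\})\cup\{w\}$, where $N^+_D(y)$ is a clique containing $u$ or $v$, so all of its other members lie in $N_H(w)$ and are thus adjacent to $w$ in $H$. This makes $D'$ a $1$-perfect orientation of $H$ and completes the induction.

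The main obstacle, and the reason the statement is not entirely trivial, is to orient the edges at the contracted vertex $w$ so that $N^+_{D'}(w)$ is a clique; a naive attempt to let $N^+_{D'}(w)$ absorb both $N^+_D(u)$ and $N^+_D(v)$ does not work. The resolution exploits the asymmetry of the contracted arc together with the key observation, which forces every neighbor of $u$ that is not a neighbor of $v$ to point into $w$, so that $N^+_{D'}(w)$ collapses to the single clique $N^+_D(v)$; the remaining case checking is routine.
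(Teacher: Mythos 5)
Your proof is correct. Note that the paper does not actually prove Proposition~\ref{prop:minors}: it is recalled from~\cite{HM2016} without proof, so there is no in-paper argument to compare against. Your write-up supplies a complete, self-contained justification along the standard lines: closure under vertex deletion is immediate, and for a single edge contraction you exploit the asymmetry of the contracted arc $(u,v)$ via the key inclusion $N^+_D(u)\subseteq N_G[v]$, which forces every neighbor of $u$ outside $N_G[v]$ to point into the new vertex $w$ and lets $N^+_{D'}(w)$ collapse to the clique $N^+_D(v)$. I verified the remaining case analysis: for $y\neq w$ with $w\in N^+_{D'}(y)$, your rule guarantees $v\in N^+_D(y)$, so every element of $N^+_D(y)\setminus\{u,v\}$ lies in $N_G(v)$ and hence is adjacent to $w$ in $H$, and the set $(N^+_D(y)\setminus\{u,v\})\cup\{w\}$ is indeed a clique; your phrase ``containing $u$ or $v$'' is slightly looser than what actually occurs (it is always $v$), but the argument goes through in either reading. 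The other cases are routine, as you say.
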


Propositions~\ref{prop:operations} and~\ref{prop:minors} imply the following.

\begin{corollary}\label{cor:connected}
A graph $G$ is \po~if and only if each component of $G$ is \po
\end{corollary}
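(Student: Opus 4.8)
The plan is to read off both directions from the two immediately preceding propositions, so the argument is essentially one line in each direction plus a trivial bookkeeping remark.

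For the forward implication, I would note that every connected component $C$ of $G$ coincides with the induced subgraph $G[V(C)]$, and that any induced subgraph is in particular an induced minor (take the sequence of vertex deletions that removes $V(G)\setminus V(C)$, using no edge contractions at all). Hence Proposition~\ref{prop:minors} applies and shows that $C$ is \po~whenever $G$ is \po.

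For the converse, I would invoke closure under disjoint union, Proposition~\ref{prop:operations}(a). Since that statement is phrased for two graphs, I would run a short induction on the number $k$ of components of $G$: the case $k=1$ is immediate, and for $k\ge 2$ I would write $G = G' + C$, where $C$ is one component and $G'$ is the disjoint union of the remaining $k-1$ components; by the induction hypothesis $G'$ is \po, by assumption $C$ is \po, and Proposition~\ref{prop:operations}(a) then gives that $G = G'+C$ is \po.

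There is no real obstacle here: the only points worth spelling out are that a component is an induced minor (so that Proposition~\ref{prop:minors} is legitimately applicable) and that the binary disjoint-union closure needs to be iterated over the components. Both are routine, and the write-up should take only a couple of lines.
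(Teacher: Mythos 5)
Your argument is correct and matches the paper's intent exactly: the paper derives this corollary directly from Proposition~\ref{prop:minors} (components are induced subgraphs, hence induced minors) and Proposition~\ref{prop:operations}(a) (closure under disjoint union), without spelling out the routine induction over components that you include. Nothing further is needed.
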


And from Propositions~\ref{prop:non-1-po} and~\ref{prop:minors} we obtain the following.

\begin{corollary}\label{cor:minors}
Let $G$ be a graph such that some graph $F_i$ (with $1\le i\le 4$, see Figure~\ref{fig:1}) is an induced minor of it. Then $G$ is not \po
\end{corollary}

Since the class of \po~graphs is closed under induced minors, it can be characterized in terms of {\it minimal forbidden induced minors}. That is,
there exists a unique minimal set of graphs $\tilde{\cal F}$ such that a graph $G$ is \po~if and only if
$G$ is {\it $\tilde{\cal F}$-induced-minor-free}. Such a set is minimal in the sense that every induced minor from a graph in {\it $\tilde{\cal F}$}
is \po~In a recent paper~\cite{HM2016}, ten specific minimal forbidden induced minors
for the set of \po~graphs, along with two infinite families (generalizing the graphs $F_3$ and $F_4$ from Figure~\ref{fig:1}, respectively)
were identified. However, a complete set $\tilde{\cal F}$ of minimal forbidden induced minors is unknown.

A \emph{bipartite graph} is a graph whose vertex set can be partitioned into two independent sets. A graph is said to be \emph{co-bipartite} if
its complement is bipartite. Co-bipartite \po~graphs play an important role in the characterization of when the join of two graphs is $1$-p.o.

\begin{theorem}\label{prop:co-bipartite-join}
For every two graphs $G_1$ and $G_2$, their join $G_1\ast G_2$ is \po~if and only if one of the following conditions holds:
\begin{enumerate}[(i)]
  \item $G_1$ is a complete graph and $G_2$ is a \po~graph, or vice versa.
  \item Each of $G_1$ and $G_2$ is a co-bipartite \po~graph.
\end{enumerate}
In particular, the class of co-bipartite \po~graphs is closed under join.
\end{theorem}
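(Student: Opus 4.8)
The plan is to prove the two directions of the equivalence separately; the ``if'' direction will be constructive, and the ``only if'' direction will be handled by exhibiting forbidden induced subgraphs. For the forward part, suppose first that (i) holds, say $G_1\cong K_m$. Then $G_1\ast G_2$ is obtained from $G_2$ by adding the $m$ vertices of $G_1$ one at a time, each of them being universal in the graph built so far (it is adjacent to all of $V(G_2)$ and, since $G_1$ is complete, to all previously added vertices), so $G_1\ast G_2$ is \po~by Proposition~\ref{prop:operations}(c). Now suppose (ii) holds. Since each $G_i$ is co-bipartite, fix a partition $V(G_i)=A_i\cup B_i$ into two cliques of $G_i$ and a $1$-perfect orientation $D_i$ of $G_i$. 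Orient $G_1\ast G_2$ by keeping $D_1$ and $D_2$ inside the factors and orienting all join edges cyclically through the four clique parts: $A_1\to A_2\to B_1\to B_2\to A_1$, meaning every join edge is directed from the earlier listed part to the later one (these four pairs exhaust the join edges). Then the out-neighborhood of any vertex is the union of a clique of one factor (its out-neighborhood in the relevant $D_i$) with exactly one of $A_1,A_2,B_1,B_2$, which is again a clique of a factor; since every vertex of $G_1$ is adjacent to every vertex of $G_2$ in the join, these two cliques are completely joined to each other, and hence their union is a clique. Thus $G_1\ast G_2$ is $1$-perfectly orientable. The ``in particular'' part then follows, because $\overline{G_1\ast G_2}=\overline{G_1}+\overline{G_2}$ is bipartite whenever both $\overline{G_i}$ are.

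For the converse, assume $G_1\ast G_2$ is \po. Each $G_i$ is an induced subgraph, hence (Proposition~\ref{prop:minors}) an induced minor, of $G_1\ast G_2$, so each $G_i$ is \po. If $G_1$ or $G_2$ is complete we are in case (i), so assume neither is, fix non-adjacent $a,b\in V(G_1)$, and aim to show that $G_2$ is co-bipartite; by symmetry this will also give that $G_1$ is co-bipartite, yielding (ii). If $G_2$ were not co-bipartite, $\overline{G_2}$ would contain an odd cycle, and a shortest such cycle $C$ would be induced in $\overline{G_2}$ — a chord of a shortest odd cycle splits it into a strictly shorter odd cycle and an even cycle — say of length $2\ell+1$ with $\ell\ge1$. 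Then $G_2$ contains an induced $\overline{C_{2\ell+1}}$, so $V(C)\cup\{a,b\}$ induces $\overline{C_{2\ell+1}}\ast 2K_1$ in $G_1\ast G_2$; note that for $\ell=1$ this subgraph is exactly $K_{2,3}$. The proof therefore reduces to the claim that $\overline{C_{2\ell+1}}\ast 2K_1$ is not \po~for every $\ell\ge1$, which contradicts Proposition~\ref{prop:minors}.

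To establish the claim, write the antihole vertices as $v_1,\ldots,v_{2\ell+1}$ (so $v_iv_{i+1}\notin E$, indices modulo $2\ell+1$) and the two remaining vertices as $a,b$ (with $ab\notin E$), each adjacent to all $v_i$. In any $1$-perfect orientation, the out-neighborhood of $a$ is a clique contained in $\{v_1,\ldots,v_{2\ell+1}\}$, hence a clique of $\overline{C_{2\ell+1}}$, that is, an independent set of $C_{2\ell+1}$, so it has at most $\ell$ vertices; consequently $a$ has at least $\ell+1$ in-neighbors among the $v_i$, and likewise $b$. But no $v_i$ can have both $a$ and $b$ in its out-neighborhood, since $\{a,b\}$ is not a clique; so the sets of $v_i$-in-neighbors of $a$ and of $b$ are disjoint, yet each has size at least $\ell+1$ inside a set of size $2\ell+1$, a contradiction. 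I expect this last claim — together with the observation that a non-co-bipartite factor is always ``witnessed'' by an induced antihole, so that the $K_{2,3}$-obstruction extends to the whole family $\overline{C_{2\ell+1}}\ast 2K_1$ — to be the only genuinely non-routine point; the remaining steps are bookkeeping with the closure properties recalled in Propositions~\ref{prop:operations} and~\ref{prop:minors}.
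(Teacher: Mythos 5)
Your argument is correct, but there is nothing in this paper to compare it against: Theorem~\ref{prop:co-bipartite-join} is recalled in the preliminaries from~\cite{HM2016} and is not proved here. Judged on its own terms, your proof is sound. For sufficiency, adding the vertices of a complete factor one at a time as universal vertices handles case~(i), and in case~(ii) the cyclic orientation $A_1\to A_2\to B_1\to B_2\to A_1$ of the cross edges does cover all four types of join edges; each out-neighborhood is then the union of an out-neighborhood in some $D_i$ (a clique of one factor) with exactly one of the four parts (a clique of the other factor), and these are completely joined, so the union is a clique. The derivation of the ``in particular'' clause from $\overline{G_1\ast G_2}=\overline{G_1}+\overline{G_2}$ is also fine. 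For necessity, the counting argument inside $\overline{C_{2\ell+1}}\ast 2K_1$ is valid: each of $a,b$ has at most $\ell$ out-neighbors on the antihole (an independent set of $C_{2\ell+1}$), hence at least $\ell+1$ in-neighbors there, and these two in-neighbor sets must be disjoint since no $v_i$ may send arcs to both $a$ and $b$, giving $2\ell+2>2\ell+1$.

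One remark: the antihole detour in the converse can be short-circuited. With the same non-adjacent pair $a,b\in V(G_1)$, every $v\in V(G_2)$ is adjacent to both, and $v$ cannot have both $a$ and $b$ in its out-neighborhood; hence $V(G_2)\subseteq N^+(a)\cup N^+(b)$, and $N^+(a)\cap V(G_2)$ and $N^+(b)\cap V(G_2)$ are two cliques covering $V(G_2)$, so $G_2$ is co-bipartite directly. This avoids both the shortest-odd-cycle lemma and the auxiliary family $\overline{C_{2\ell+1}}\ast 2K_1$ (whose non-\po-ness your route establishes as a by-product, correctly but unnecessarily for the theorem).
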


%---------------------------------------------------------------------------------------------------------------------------------

\section{\po~Cartesian product graphs}

We start with a characterization of nontrivial Cartesian product graphs that are $1$-p.o.
The \emph{Cartesian product} $G \Box H$ of two graphs $G$ and $H$ is the graph with vertex set
$V(G) \times V(H)$ in which two distinct vertices $(u,v)$ and $(u',v')$ are adjacent if and only if
\begin{enumerate}[(a)]
\item $u = u'$ and $v$ is adjacent to $v'$ in $H$, or
\item $v = v'$ and $u$ is adjacent to $u'$ in $G$.
\end{enumerate}

The Cartesian product of two graphs is commutative, in the sense that $G \Box H \cong H \Box G$.
%The Cartesian product of two graphs is connected if and only if both factors are connected (see~\cite[Corollary 5.3]{opac-b1132990}).  More specifically, if $G$
%has components $G_1, \ldots, G_k$ and $H$ has components $H_1, \ldots, H_\ell$, then the components of $G \Box H$ are exactly $G_{i} \Box H_{j}$ for
%$i\in\{1, \ldots, k\}$ and $j\in \{1, \ldots, \ell\}$.
The following theorem characterizes when a nontrivial Cartesian product graph is $1$-p.o. For the proof, let us note that $P_3 \Box K_2$ is isomorphic to the {\it domino} (graph $F_1$ in Fig.~\ref{fig:1}), and $K_3 \Box K_2$ is isomorphic to $\overline{C_6}$ (the first graph in the family ${\cal F}_1$, see Fig.~\ref{fig:1}).

\begin{theorem}\label{thm:cartesian}
A nontrivial Cartesian product, $G \Box H$, of two graphs $G$ and $H$ is \po~if and only if
one of the following conditions holds:
\begin{enumerate}[(i)]
  \item $G$ is edgeless and $H$ is \po, or vice versa.
%    \item Each connected component of $G$ is isomorphic to either $K_1$ or $K_2$, and the same holds for $H$.
%\item Connected components of $G$ and $H$ have at most two vertices each.
  \item $G\cong pK_1+qK_2$ and $H\cong rK_1+sK_2$ for some $p,q,r,s\ge 0$.
%\item Both $G$ and $H$ are $2$-linear forests.
 \end{enumerate}
\end{theorem}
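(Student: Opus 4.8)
The plan is to prove both directions by reducing to small forbidden configurations and to Corollary~\ref{cor:connected}. For the ``if'' direction, suppose condition (i) holds: if $G$ is edgeless, then $G \Box H$ is isomorphic to a disjoint union of $|V(G)|$ copies of $H$, so it is \po~by Proposition~\ref{prop:operations}(a) whenever $H$ is. If condition (ii) holds, then by commutativity and Corollary~\ref{cor:connected} it suffices to treat each component of $G \Box H$; since the components of $pK_1 + qK_2$ are $K_1$ and $K_2$ (and similarly for $H$), every component of $G \Box H$ is isomorphic to one of $K_1 \Box K_1 = K_1$, $K_1 \Box K_2 = K_2$, or $K_2 \Box K_2 = C_4$. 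Each of these is chordal or a circular arc graph, hence \po~by the results recalled in the introduction (or one can exhibit a $1$-perfect orientation of $C_4$ directly, orienting it as a directed $4$-cycle, whose out-neighborhoods are single vertices).

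For the ``only if'' direction, assume $G \Box H$ is \po~and that condition (i) fails, i.e.\ both $G$ and $H$ have at least one edge; we must derive condition (ii). The key observation is that $P_3 \Box K_2$ is the domino $F_1$ and $K_3 \Box K_2$ is $\overline{C_6}$, and both are non-\po~by Proposition~\ref{prop:non-1-po}; moreover, since taking an induced subgraph is a special case of taking an induced minor, Proposition~\ref{prop:minors} (or Corollary~\ref{cor:minors}) forbids $G \Box H$ from containing either as an induced subgraph. Now if $H$ contains an edge $vv'$, then for any induced subgraph $G'$ of $G$ the product $G' \Box K_2$ is an induced subgraph of $G \Box H$ (restrict the second coordinate to $\{v,v'\}$). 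Taking $G'$ to be an induced $P_3$ forces a contradiction with the domino; taking $G'$ to be a triangle forces a contradiction with $\overline{C_6}$. Hence $G$ contains no induced $P_3$ and no triangle, which is precisely the statement that every component of $G$ is $K_1$ or $K_2$, i.e.\ $G \cong pK_1 + qK_2$. By symmetry (using that $G$ also has an edge) the same argument applied with the roles reversed gives $H \cong rK_1 + sK_2$, establishing condition (ii).

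The main obstacle, and the only place requiring a little care, is making sure the induced-subgraph claim ``$G' \Box K_2$ is an induced subgraph of $G \Box H$ when $vv' \in E(H)$'' is argued cleanly: one checks from the definition of $\Box$ that the map $(u, \varepsilon) \mapsto (u, v)$ for $\varepsilon = 0$ and $(u,\varepsilon)\mapsto (u,v')$ for $\varepsilon = 1$ embeds $G' \Box K_2$ into $G \Box H$ with no extra edges, because two vertices of $G \Box H$ with second coordinates in $\{v,v'\}$ are adjacent exactly when their first coordinates agree and $vv' \in E(H)$, or their second coordinates agree and the first coordinates are adjacent in $G$ --- which is exactly adjacency in $G' \Box K_2$. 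Everything else is a direct appeal to the already-established propositions, so the proof is short once this embedding is recorded. One should also remember to note at the start that if $G$ is edgeless we are in case (i) and done, so the ``only if'' analysis may assume both factors have an edge, which is what lets us invoke the symmetry.
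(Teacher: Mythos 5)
Your proposal is correct and follows essentially the same route as the paper: both use the facts that $P_3 \Box K_2$ is the domino and $K_3 \Box K_2 \cong \overline{C_6}$ (neither of which is \po) to rule out any component of a factor having three or more vertices, and both verify sufficiency by observing that under (ii) every component of the product is $K_1$, $K_2$, or $C_4$. The only cosmetic difference is that you phrase the necessity argument via ``no induced $P_3$ and no triangle'' rather than the paper's contradiction starting from a component on at least three vertices, and you spell out the $G' \Box K_2$ embedding explicitly, which the paper leaves implicit.
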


\begin{proof}
Suppose first that $G \Box H$ is $1$-p.o., and, for the sake of contradiction, that none of the conditions $(i)$ and $(ii)$ hold.
Since both $G$ and $H$ are induced subgraphs of $G\Box H$, they are both \po~(by Corollary~\ref{cor:minors}).
Since $(i)$ does not hold, each of $G$ and $H$ contains an edge.
Since property $(ii)$ does not hold, we may assume that $G$ contains a component with at least three vertices.
In particular, $G$ contains $P_3$ as a (not necessarily induced) subgraph.
If $G$ contains an induced $P_3$, then $G\Box H$ contains an induced domino, and is therefore not \po~by Corollary~\ref{cor:minors}.
Similarly, if $G$ contains an induced $K_3$, then $G\Box H$ contains an induced copy of $K_3 \Box K_2\cong \overline{C_6}$, and is therefore not \po, again by
Corollary~\ref{cor:minors}.
In either case, we reach a contradiction.

Conversely, suppose that one of the conditions $(i)$ and $(ii)$ holds.
If condition $(i)$ holds, we may assume that $G$ is edgeless and $H$ is \po, then
the product $G \Box H$ is isomorphic to a disjoint union of $|V(G)|$ copies of $H$, and thus \po~by
Proposition~\ref{prop:operations}.
If condition $(ii)$ holds, then each component of the product $G \Box H$ is isomorphic to either
$K_1$, $K_2$, or $C_4$, which are all \po~graphs (the cyclic orientation of $C_4$ is $1$-perfect). To obtain the desired conclusion, we again apply
the fact that \po~graphs are closed under disjoint union.
\end{proof}

%---------------------------------------------------------------------------------------------------------------------------------

\section{\po~lexicographic product graphs}

In this section, we characterize nontrivial lexicographic product graphs that are $1$-p.o.
Given two graphs $G$ and $H$, the \emph{lexicographic product} of $G$ and $H$, denoted by $G[H]$ (sometimes also by $G \circ{H}$) is the graph with vertex
set $V(G) \times{V(H)}$, in which two distinct vertices $(u,v)$ and $(u',v')$ are adjacent if and only if
\begin{enumerate}[(a)]
\item $u$ is adjacent to $u'$ in $G$, or
\item $u = u'$ and $v$ is adjacent to $v'$ in $H$.
\end{enumerate}

Note that contrary to the other three products considered in this paper, the lexicographic product is not commutative, that is, $G[H] \ncong H[G]$
in general.
The following theorem characterizes when a nontrivial lexicographic product graph is \po
%By~\cite[Corollary 5.14]{opac-b1132990}, the lexicographic product $G[H]$ of two nontrivial graphs is connected if and only if $G$ is connected.
%In particular, if $G$ has components, $G_1,\ldots, G_k$, then the components of
%$G[H]$ are $G_1[H],\ldots, G_k[H]$.

\begin{theorem}\label{thm:lexicographic}
A nontrivial lexicographic product, $G[H]$, of two graphs $G$ and $H$ is \po~if and only if one of the following conditions holds:
\begin{enumerate}[(i)]
\item $G$ is edgeless and $H$ is \po
\item $G$ is \po~and $H$ is complete.
\item Every component of $G$ is complete and $H$ is a co-bipartite \po~graph.
\end{enumerate}
\end{theorem}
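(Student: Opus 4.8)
The plan is to prove both directions by analysing the structure of $G[H]$ in terms of its two factors, using the known closure properties (Proposition~\ref{prop:operations}), the forbidden induced minors (Corollary~\ref{cor:minors}), and especially the join characterization (Theorem~\ref{prop:co-bipartite-join}), since a lexicographic product is built out of joins of copies of $H$.

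\medskip

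\textbf{Sufficiency.} Suppose one of $(i)$--$(iii)$ holds. If $(i)$ holds, then $G[H]$ is a disjoint union of $|V(G)|$ copies of $H$, hence \po~by Proposition~\ref{prop:operations}(a). If $(ii)$ holds, I observe that $G[H]$ with $H$ complete is obtained from $G$ by repeatedly adding true twins (each vertex of $G$ becomes a clique of $|V(H)|$ mutually true-twin vertices), so $G[H]$ is \po~by repeated application of Proposition~\ref{prop:operations}(b). If $(iii)$ holds, then by Corollary~\ref{cor:connected} it suffices to handle a single component $K_n$ of $G$; but $K_n[H]$ is exactly the join of $n$ copies of $H$, and since $H$ is a co-bipartite \po~graph, Theorem~\ref{prop:co-bipartite-join} (specifically the ``closed under join'' part, applied $n-1$ times) shows $K_n[H]$ is (co-bipartite and) \po. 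Then Corollary~\ref{cor:connected} finishes the case of general $G$ with all components complete.

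\medskip

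\textbf{Necessity.} Assume $G[H]$ is \po~and none of $(i)$--$(iii)$ holds; I derive a contradiction. First, since $G$ and $H$ both embed as induced subgraphs of $G[H]$, both are \po. Because $(i)$ fails, $G$ has an edge. Because $(ii)$ fails, $H$ is not complete, so $H$ has two non-adjacent vertices; this means each ``fibre'' $\{u\}\times V(H)$ contains an induced $2K_1$. The two remaining cases are: (a) some component of $G$ is not complete, i.e.\ $G$ has an induced $P_3$; and (b) every component of $G$ is complete but $H$ is not a co-bipartite \po~graph. In case (a), take an induced $P_3 = u_1u_2u_3$ in $G$ and two non-adjacent vertices $v,v'$ of $H$; I will locate a forbidden configuration in $G[H]$. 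The subgraph induced on $\{u_1,u_3\}\times\{v,v'\}$ together with $\{u_2\}\times\{v\}$ should yield (after checking adjacencies) an induced $K_{2,3}$-like or $F_i$-type obstruction — more carefully, $(u_1,v),(u_1,v')$ are non-adjacent, $(u_3,v),(u_3,v')$ are non-adjacent, each of these four is adjacent to $(u_2,v)$, and there are no edges between the $u_1$-pair and the $u_3$-pair; contracting within fibres if needed, this produces an induced minor equal to $F_1$ (the domino) or $F_2$, contradicting Corollary~\ref{cor:minors}. In case (b), $H$ is \po~but not co-bipartite \po; since every component of $G$ is complete and $G$ has an edge, $K_2[H] = H\ast H$ is an induced subgraph of $G[H]$, so $H\ast H$ is \po. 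By Theorem~\ref{prop:co-bipartite-join} applied to $G_1=G_2=H$: either $H$ is complete (excluded, since $(ii)$ fails) or $H$ is co-bipartite \po~(excluded by assumption) — contradiction.

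\medskip

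\textbf{Main obstacle.} The genuinely delicate step is case (a) of necessity: pinning down exactly which forbidden induced minor appears from an induced $P_3$ in $G$ combined with a non-edge in $H$, and verifying the adjacencies and the contraction carefully, including when $H$ has additional vertices that could interfere. I expect the clean statement is that $G[H]$ then contains $F_1$ (equivalently $P_3\Box K_2$, the domino) as an induced minor — obtained by picking $v,v'$ non-adjacent in $H$, using the six vertices $\{u_1,u_2,u_3\}\times\{v,v'\}$, deleting all other vertices, and contracting each edge $(u_i,v)(u_i,v')$ when it is present (it is present iff $v,v'$ were chosen adjacent, so choosing them non-adjacent avoids even needing contraction) — after which Corollary~\ref{cor:minors} applies. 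Everything else reduces to bookkeeping with the cited propositions.
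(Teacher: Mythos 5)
Your overall strategy coincides with the paper's: sufficiency via disjoint unions, true-twin additions, and the join-closure of co-bipartite \po~graphs; necessity via the observation that $K_2[H]\cong H\ast H$ forces $H$ to be co-bipartite (Theorem~\ref{prop:co-bipartite-join}), together with a forbidden configuration arising from an induced $P_3$ in $G$ and a non-edge of $H$. Your sufficiency argument and your case (b) of necessity are correct and essentially identical to the paper's.

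However, your resolution of what you call the ``main obstacle'' in case (a) is wrong. With $u_1u_2u_3$ an induced $P_3$ in $G$ and $v,v'$ non-adjacent in $H$, the six vertices $\{u_1,u_2,u_3\}\times\{v,v'\}$ do \emph{not} induce the domino $F_1\cong P_3\Box K_2$ in $G[H]$: you have transplanted the adjacency rule of the Cartesian product into the lexicographic one. In $G[H]$ the pairs $(u_i,v)$ and $(u_i,v')$ are non-adjacent (because $v\not\sim v'$), while \emph{every} vertex of the fibre over $u_1$ (and over $u_3$) is adjacent to \emph{both} vertices of the fibre over $u_2$ --- exactly the opposite adjacency pattern from the domino. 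What these six vertices induce is $P_3[2K_1]\cong K_{2,4}$, which contains the non-\po~graph $K_{2,3}$ as an induced subgraph; that is the obstruction the paper invokes through Corollary~\ref{cor:minors} and Figure~\ref{fig:1}. Note also that your intermediate five-vertex configuration $\bigl(\{u_1,u_3\}\times\{v,v'\}\bigr)\cup\{(u_2,v)\}$ induces only the star $K_{1,4}$, which is chordal and hence \po, so it is not an obstruction at all: you need both vertices of the middle fibre to create the $K_{2,3}$. Once the forbidden subgraph is corrected from the domino to $K_{2,3}$, your proof goes through and matches the paper's.
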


\begin{proof}
Suppose first that $G[H]$ is $1$-p.o.~Then, both $G$ and $H$ are \po~since they are induced subgraphs of $G[H]$.
Suppose for the sake of contradiction that none of conditions $(i)$--$(iii)$ holds. Then, in particular, $G$ has an edge and $H$ is not complete.
Since $G$ has an edge, we get that $K_2[H]$ is an induced subgraph of $G[H]$ isomorphic to the join
of two copies of $H$. Consequently, $H \ast H$ is \po~By Proposition~\ref{prop:co-bipartite-join} we obtain that $H$ is co-bipartite.
Therefore, since we assume that $(iii)$ fails, $G$ has a component that is not complete. In particular, there exists an induced $P_3$ in $G$; since $H$ contains
an
induced $2K_1$ and $P_3[2K_1] \cong K_{2,4}$ we obtain that $G[H]$ contains $K_{2,3}$ as an induced subgraph, and by Corollary~\ref{cor:minors} it cannot
be
\po

For the converse direction, we will show that in any of the three cases $(i)$, $(ii)$, and $(iii)$, the graph $G[H]$ is $1$-p.o.
If $G$ is edgeless and $H$ is \po, then the product $G[H]$ is isomorphic to the disjoint union of $|V(G)|$ copies of $H$, and therefore \po~by
Proposition~\ref{prop:operations}.
If $G$ is \po~and $H$ is complete, then the product $G[H]$ is isomorphic to the graph obtained by repeatedly substituting a vertex of $G$
with a complete graph. Substituting a vertex $v$ with a complete graph is the same as adding a sequence of true twins to vertex $v$, which by
Proposition~\ref{prop:operations} results in a \po~graph. It follows that $G[H]$ is $1$-p.o.
Finally, suppose that every component of $G$ is complete and $H$ is a co-bipartite \po~graph.
Since if $G$ has components $G_1,\ldots, G_k$, then the components of
$G[H]$ are $G_1[H],\ldots, G_k[H]$ and the set of \po~graphs is closed under disjoint union, it suffices to consider the case
when $G$ is connected (that is, complete). In this case, an inductive argument on the order of $G$ together with the fact that
co-bipartite \po~graphs are closed under join (Proposition~\ref{prop:co-bipartite-join}) shows that $G[H]$ is \po
\end{proof}

\section{\po~direct product graphs}

In this section, we characterize nontrivial direct product graphs that are $1$-p.o.
The \emph{direct product} $G \times H$ of two graphs $G$ and $H$
(sometimes also called {\it tensor product}, {\it categorical product}, or {\it Kronecker product})
is the graph with vertex set
$V(G) \times V(H)$ in which two distinct vertices $(u,v)$ and $(u',v')$ are adjacent if and only if
\begin{enumerate}[(a)]
\item $u$ is adjacent to $u'$ in $G$, and
\item $v$ is adjacent to $v'$ in $H$.
\end{enumerate}

The direct product of two graphs is commutative, in the sense that $G \times H \cong H \times G$. By~\cite[Corollary 5.10]{opac-b1132990}, the direct
product of (at least two) connected nontrivial graphs is connected if and only if at most one of the factors is bipartite
(in fact, the product has $2^{k-1}$ components where $k$ is the number of bipartite factors).

We start with some necessary conditions for the direct product of two graphs to be $1$-p.o.
We say that a graph is {\it triangle-free} if it is $C_3$-free.

\begin{lemma}\label{lem:forbidden-direct}
Suppose that the direct product of two graphs $G$ and $H$ is $1$-p.o. Then:
\begin{enumerate}
\item If one of $G$ and $H$ contains an induced $P_3$ or $C_3$, then the other one is $\{{\it claw}, C_3, C_4, C_5, P_5\}$-free.
\item At least one of $G$ and $H$ is triangle-free.
\item At least one of $G$ and $H$ is $P_4$-free.
\end{enumerate}
\end{lemma}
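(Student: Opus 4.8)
The plan is to combine a single observation about $1$-perfectly orientable bipartite graphs with a handful of small direct-product computations. The observation is: \emph{if a bipartite graph $B$ is $1$-p.o., then $|E(B')|\le|V(B')|$ for every subgraph $B'$ of $B$} (equivalently, every component of $B$ has at most one cycle). To see this, note that in any orientation of a bipartite $B$ the out-neighborhood of a vertex $v$ is contained in $N_B(v)$, which is an independent set; in a $1$-perfect orientation $N^+(v)$ is therefore simultaneously independent and a clique, hence $|N^+(v)|\le 1$. So $B$ admits an orientation of maximum out-degree at most $1$, and counting the arcs leaving the vertices of any subgraph $B'$ yields $|E(B')|\le|V(B')|$.

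The reduction behind part (1) is the following. If $G$ contains an induced $P_3$ on $\{x,y,z\}$ and $H$ contains an induced $F$ on a set $S$, then $\{x,y,z\}\times S$ induces in $G\times H$ a copy of $P_3\times F$, which is bipartite since $P_3$ is; hence, if $G\times H$ is $1$-p.o., the observation forces $|E(P_3\times F)|\le|V(P_3\times F)|$. I would then verify that this fails for each $F\in\{\mathrm{claw},C_3,C_4,C_5,P_5\}$: $P_3\times(\mathrm{claw})$ has a component isomorphic to $K_{2,3}$ ($5$ vertices, $6$ edges); $P_3\times C_4$ has a component isomorphic to $K_{2,4}$ ($6$ vertices, $8$ edges); $P_3\times C_3$ and $P_3\times C_5$ are \emph{connected} — by the connectivity criterion recalled in the text, since $C_3$ and $C_5$ are non-bipartite — with $9$ vertices and $12$ edges, respectively $15$ vertices and $20$ edges; and $P_3\times P_5$ has a component with $7$ vertices and $8$ edges (two $4$-cycles sharing a vertex). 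In each case $|E|>|V|$, so $H$ contains none of these graphs as an induced subgraph whenever $G$ has an induced $P_3$; by commutativity of $\times$ the same holds with $G$ and $H$ interchanged.

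It remains to treat the case where $G$ has an induced $C_3$ but no induced $P_3$, and to prove parts (2) and (3). For part (2): if $G$ and $H$ both contain a triangle, say $\{x_1,x_2,x_3\}$ and $\{y_1,y_2,y_3\}$, then $\{x_1,x_2,x_3\}\times\{y_1,y_2,y_3\}$ induces a copy of $K_3\times K_3$ in $G\times H$, and the two ``diagonal'' triangles $\{(x_i,y_i)\}$ and $\{(x_i,y_{i+1})\}$ (indices mod $3$) induce a prism $K_2\,\Box\,K_3\cong\overline{C_6}$; since $G\times H$ then contains an induced $\overline{C_6}$, it is not $1$-p.o.\ by Corollary~\ref{cor:minors}, which proves part (2). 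Returning to part (1), suppose $G$ has an induced $C_3$ and no induced $P_3$ (the $P_3$ case being done above). If $H$ had an induced $C_3$, part (2) would contradict $1$-perfect orientability; and if $H$ had an induced copy of one of $\mathrm{claw},C_4,C_5,P_5$, then $H$ would contain an induced $P_3$, so by the reduction above (with the roles of $G$ and $H$ swapped) $G$ would be $\{\mathrm{claw},C_3,C_4,C_5,P_5\}$-free, contradicting that $G$ contains an induced $C_3$. Finally, for part (3): if $G$ and $H$ both contain an induced $P_4$, then $G\times H$ contains an induced $P_4\times P_4$, which is bipartite with a component having $8$ vertices and $9$ edges (equivalently, one checks $P_4\times P_4$ contains an induced domino, $=F_1$), so $P_4\times P_4$, and hence $G\times H$, is not $1$-p.o.

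The step I expect to require the most care is the batch of ``$P_3\times F$ is not a pseudoforest'' verifications: for the disconnected products one must pick out the relevant component and confirm that the claimed $K_{2,3}$, $K_{2,4}$, or pair of squares is exactly the induced subgraph obtained, and for $P_3\times C_3$ and $P_3\times C_5$ one must invoke the connectivity criterion. The point of routing everything through the edge-counting observation is precisely that several of these small products — notably $P_3\times C_3$, $P_3\times C_5$, and $P_3\times P_5$ — do \emph{not} contain any member of $\{F_1,\dots,F_4\}$ as an \emph{induced subgraph}, so a direct appeal to Corollary~\ref{cor:minors} on an explicit small obstruction is not available there; the bipartite observation makes the whole case analysis uniform and short. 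The remaining bookkeeping (the $\overline{C_6}$ argument and the split in the $C_3$-in-$G$ case) is routine.
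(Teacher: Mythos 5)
Your proof is correct, but it reaches the conclusion by a genuinely different route from the paper. The paper proves each forbidden-product claim by exhibiting an explicit non-\po~induced minor ($K_{2,3}$, $F_2$, $\overline{C_6}$, or the domino) inside each of the small products $P_3\times F$, $C_3\times F$, and $P_4\times P_4$, and then invoking Corollary~\ref{cor:minors}; this requires a separate picture for each case. You instead route almost everything through a single uniform observation --- a bipartite \po~graph admits an orientation with maximum out-degree~$1$, hence every subgraph satisfies $|E|\le|V|$ --- and then only count vertices and edges of the relevant component of each product $P_3\times F$ and of $P_4\times P_4$. (This observation is essentially \cite[Corollary 5.7]{MR1244934}, which the paper itself uses later in the proof of Proposition~\ref{prop:direct}, so nothing new is being assumed.) Your counts check out: the components $K_{2,3}$, $K_{2,4}$, the two-squares-sharing-a-vertex component of $P_3\times P_5$, the connected products $P_3\times C_3$ and $P_3\times C_5$, and the $8$-vertex, $9$-edge component of $P_4\times P_4$ all violate $|E|\le|V|$. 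Your handling of the remaining cases is also sound: the diagonal-triangles argument correctly produces an induced $\overline{C_6}=F_3$ in $K_3\times K_3$ (same as the paper), and your symmetry/swap argument for the case where $G$ has a triangle but no induced $P_3$ neatly replaces the paper's explicit verification that $C_3\times{\it claw}$, $C_3\times C_4$, $C_3\times C_5$, and $C_3\times P_5$ contain forbidden configurations. What your approach buys is uniformity and independence from the figures; what the paper's approach buys is explicit forbidden induced minors, in keeping with its induced-minor theme. One small inaccuracy in your closing commentary: you assert that $P_3\times C_3$, $P_3\times C_5$, and $P_3\times P_5$ contain no member of $\{F_1,\dots,F_4\}$ as an induced subgraph, but the paper explicitly finds an induced $F_2$ in each of these (indeed the relevant component of $P_3\times P_5$ is itself two $4$-cycles sharing a vertex). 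This misstatement does not affect the validity of your argument, since your proof never relies on it.
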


\begin{proof}
%As we can see in Figures~\ref{fig:5},~\ref{fig:6} and~\ref{fig:7} below, each of
%$P_3 \times {\it claw}$, $P_3 \times C_4$, and $C_3 \times {\it claw}$
%%$C_3 \times C_4$
%contains an induced $K_{2,3}$,
%each of
%$P_3 \times C_3$, $P_3 \times C_5$, $P_3 \times P_5$, and $C_3 \times C_3$
%contains an induced $F_2$,
%and $P_4 \times P_4$ contains an induced domino ($F_1$).
%
%\begin{figure}[H]
%  \centering
%\includegraphics[width=0.75\textwidth]{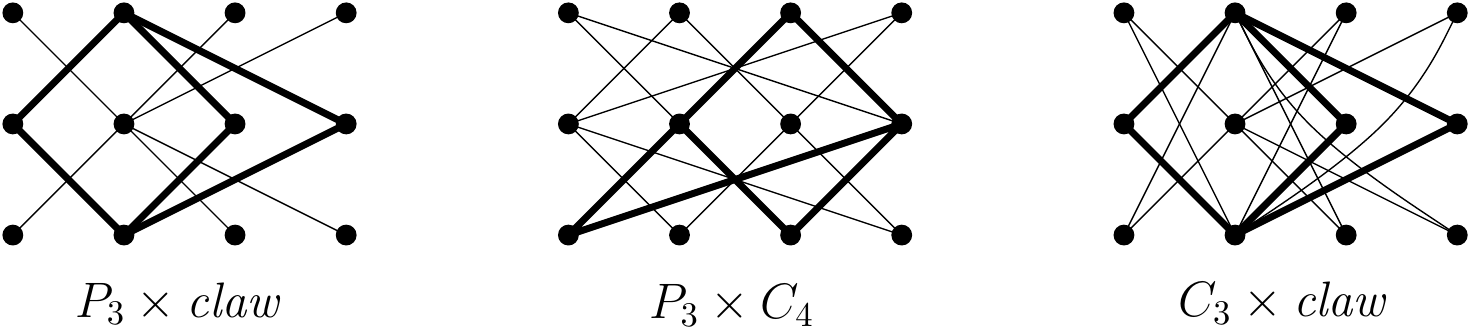}
%\caption{$K_{2,3}$ as induced subgraph of $P_3 \times {\it claw}$, $P_3 \times C_4$, and $C_3 \times {\it claw}$.}\label{fig:5}
%\end{figure}
%
%\begin{figure}[H]
%  \centering
%\includegraphics[width=\textwidth]{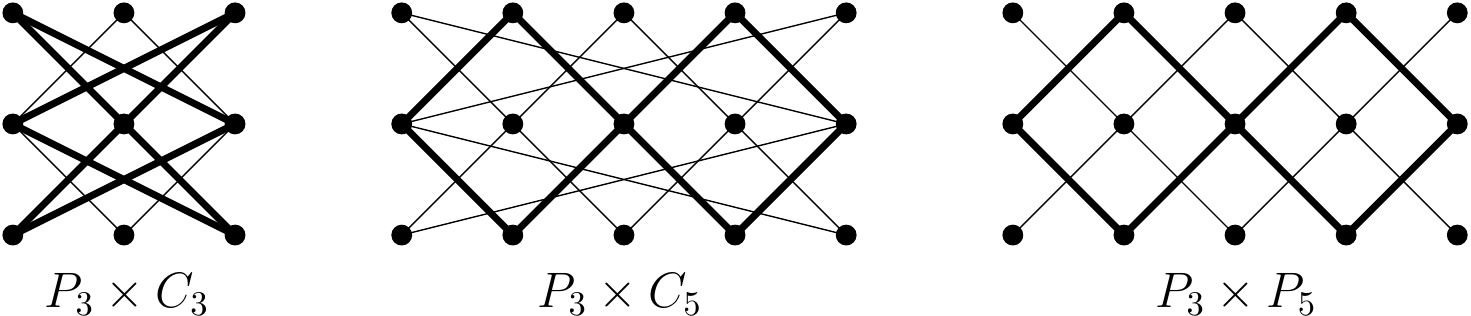}
%\caption{$F_2$ as induced subgraph of
%$P_3 \times C_3$, $P_3 \times C_5$, $P_3 \times P_5$, and $C_3 \times C_3$.}\label{fig:6}
%\end{figure}
%
%\begin{figure}[H]
%  \centering
%\includegraphics[width=0.18\textwidth]{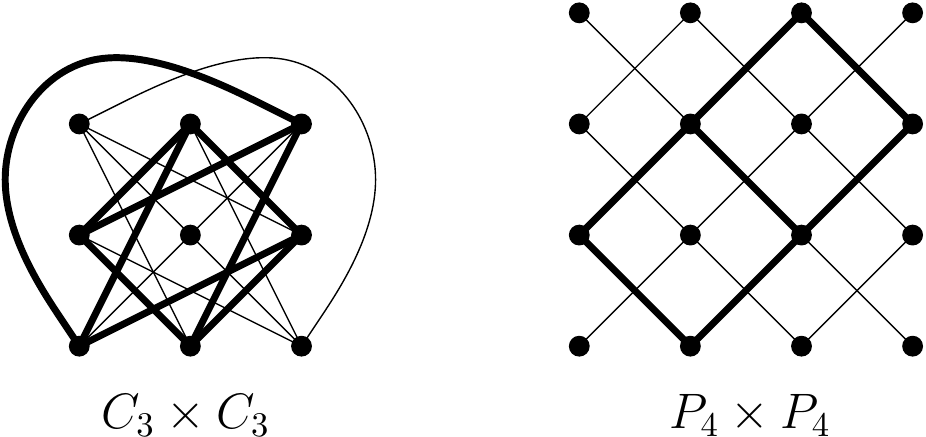}
%\caption{The domino as induced subgraph of $P_4 \times P_4$.}\label{fig:7}
%\end{figure}

As we can see in Figures~\ref{fig:5},~\ref{fig:6}, and~\ref{fig:7} below, each of
$P_3 \times {\it claw}$, $P_3 \times C_4$, and $C_3 \times {\it claw}$
contains an induced $K_{2,3}$,
each of $P_3 \times C_3$, $P_3 \times C_5$, and $P_3 \times P_5$
contains an induced $F_2$,
the graph $C_3 \times C_3$ contains an induced $F_3 = \overline{C_6}$,
and $P_4 \times P_4$ contains an induced domino ($F_1$).

\begin{figure}[H]
  \centering
\includegraphics[width=0.75\textwidth]{}
\caption{$K_{2,3}$ as induced subgraph of $P_3 \times {\it claw}$, $P_3 \times C_4$, and $C_3 \times {\it claw}$.}\label{fig:5}
\end{figure}

\begin{figure}[H]
  \centering
\includegraphics[width=0.75\textwidth]{}
\caption{$F_2$ as induced subgraph of $P_3 \times C_3$, $P_3 \times C_5$, and $P_3 \times P_5$.}\label{fig:6}
\end{figure}

\begin{figure}[H]
  \centering
\includegraphics[width=0.5\textwidth]{}
\caption{The complement of $C_6$ as induced subgraph of $C_3 \times C_3$ and the domino as induced subgraph of $P_4 \times P_4$.}\label{fig:7}
\end{figure}

Each of $C_3\times C_4$, $C_3\times C_5$, and $C_3\times P_5$ contains an induced
$C_3 \times P_3\cong P_3 \times C_3$, and therefore also an induced $F_2$.

The lemma now follows from the above observations and Corollary~\ref{cor:minors}.
\end{proof}

We say that an undirected graph is a \emph{pseudoforest} if each component of it contains at most one cycle,
a \emph{pseudotree} if it is  a connected pseudoforest, and a \emph{unicyclic} graph if it contains exactly one cycle.
We first characterize the case of two connected factors.

\begin{proposition}\label{prop:direct}
A nontrivial direct product, $G \times H$, of two connected graphs $G$ and $H$ is \po~if and only if one of the following conditions holds:
\begin{enumerate}[(i)]
\item $G\cong K_2$ and $H$ is a pseudotree, or vice versa.
\item $G\cong P_3$ and $H\cong P_4$, or vice versa.
\item $G\cong H\cong P_3$.
\end{enumerate}
\end{proposition}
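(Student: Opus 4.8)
The plan is to prove both directions. For the ``if'' direction, we must exhibit $1$-perfect orientations of the (at most three) graphs $K_2\times H$ with $H$ a pseudotree, $P_3\times P_4$, and $P_3\times P_3$. For the latter two this is a small finite check (or one can observe, as in Lemma~\ref{lem:forbidden-direct}'s proof style, that these products are chordal or circular arc, hence $1$-p.o.\ by the known results recalled in the introduction). For $K_2\times H$ the key structural fact is that, since $K_2$ is bipartite and connected, the direct product $K_2\times H$ of connected graphs has $2^{k-1}$ components where $k\in\{1,2\}$ is the number of bipartite factors among $K_2,H$; in all cases each component of $K_2\times H$ is isomorphic to a subgraph of $H$ obtained by a ``bipartite double''-type construction, and in fact $K_2\times H$ is always a disjoint union of graphs each of which is a pseudoforest when $H$ is a pseudotree (contracting the $K_2$-fibers recovers $H$). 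Since trees and unicyclic graphs are chordal or circular arc and hence $1$-p.o., and $1$-p.o.\ graphs are closed under disjoint union (Proposition~\ref{prop:operations}(a)), the ``if'' direction follows.

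For the ``only if'' direction, assume $G\times H$ is $1$-p.o.\ with $G,H$ connected and nontrivial. By Lemma~\ref{lem:forbidden-direct}, at least one of $G,H$ is triangle-free and at least one is $P_4$-free. I would split into cases according to which of $G,H$ contains an induced $P_3$. \textbf{Case A: neither $G$ nor $H$ contains an induced $P_3$.} Then each of $G,H$, being connected and $P_3$-free, is complete; since at least one is triangle-free, one factor is $K_2$ and the other is $K_1$ or $K_2$ — but both are nontrivial, so $G\cong H\cong K_2$, which is the special case $H=K_1\times$(pseudotree) — wait, rather $K_2$ itself is a pseudotree, so this falls under $(i)$. \textbf{Case B: exactly one factor, say $G$, contains an induced $P_3$.} Then $H$ is $P_3$-free and connected, hence complete, and by Lemma~\ref{lem:forbidden-direct}(2) triangle-free, so $H\cong K_2$. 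It remains to show $G$ is a pseudotree. If $G$ contained a subdivided theta-graph or two vertex-disjoint cycles (equivalently, some component fails to be a pseudoforest — but $G$ is connected, so: if $G$ is not a pseudotree), I would argue that $K_2\times G$ contains one of the forbidden induced minors $F_1,\dots,F_4$: e.g.\ a $\theta$-subgraph in $G$ yields, after taking the double and contracting, an induced minor isomorphic to one of the $F_i$ (these are precisely the ``two-cycles-sharing-structure'' obstructions), so by Corollary~\ref{cor:minors} $G\times H$ is not $1$-p.o., a contradiction. Hence $G$ is a pseudotree, giving $(i)$. \textbf{Case C: both $G$ and $H$ contain an induced $P_3$.} Then by Lemma~\ref{lem:forbidden-direct}(1), each of $G,H$ is $\{\mathit{claw},C_3,C_4,C_5,P_5\}$-free; being $C_3$-, $C_4$-, $C_5$-, $P_5$-, and claw-free and connected, each of $G,H$ has bounded structure — in fact a connected $\{\mathit{claw},P_5\}$-free graph with no cycle of length $3,4,5$ and (by Lemma~\ref{lem:forbidden-direct}(3)) at least one of them $P_4$-free is either $P_3$ or $P_4$. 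A short argument (a connected triangle-free claw-free graph is a path or a cycle; excluding $C_4,C_5,P_5$ leaves $P_3,P_4,C_n$ with $n\ge 6$; but $C_6$ already contains induced $P_5$... so actually only $P_3,P_4$ survive) pins down $\{G,H\}\subseteq\{P_3,P_4\}$; then $P_4\times P_4$ is excluded by Lemma~\ref{lem:forbidden-direct}(3), leaving $(ii)$ $P_3\times P_4$ and $(iii)$ $P_3\times P_3$.

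The main obstacle I anticipate is Case B: proving that if $G$ is a connected graph that is \emph{not} a pseudotree, then $K_2\times G$ (the bipartite double of $G$) is not $1$-p.o. One must carefully identify, inside the bipartite double of a graph containing either two edge-disjoint cycles through a common vertex, or two vertex-disjoint cycles joined by a path, a suitable induced minor among $\{F_1,F_2,F_3,F_4\}$; handling the cases where the offending cycles have small length (so the double may create short cycles or triangles disappear) versus large length requires some care, and one likely wants a clean lemma of the form ``if $G$ has an induced minor $H$ then $K_2\times G$ has an induced minor related to the double of $H$,'' reducing to checking a few explicit small non-$1$-p.o.\ graphs. Everything else reduces to finite case analysis on small graphs plus the closure and forbidden-induced-minor tools already available (Propositions~\ref{prop:operations}, \ref{prop:minors}, Corollary~\ref{cor:minors}), together with the known fact that chordal and circular arc graphs — in particular trees, unicyclic graphs, $P_3\times P_3$, and $P_3\times P_4$ — are $1$-p.o.
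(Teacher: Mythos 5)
Your overall architecture (sufficiency via pseudoforests being chordal/unicyclic plus closure under disjoint union; necessity split into a ``$K_2$ factor'' case and a ``both factors have $\ge 3$ vertices'' case resolved by Lemma~\ref{lem:forbidden-direct}) matches the paper, and your Case~C is essentially identical to the paper's argument. But the necessity direction has a genuine hole exactly where you flag ``the main obstacle'': the claim that if $G$ is connected, $K_2\times G$ is \po, then $G$ is a pseudotree, is never actually proved. You propose to hunt for one of $F_1,\dots,F_4$ as an induced minor inside the bipartite double of a non-pseudotree, but you do not carry this out, and there is no a priori guarantee it can succeed, since $\{F_1,\dots,F_4\}$ is not known to be (and is explicitly stated in Section~\ref{sec:prelim} not to be known to be) a complete list of obstructions. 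The paper sidesteps this entirely: $K_2\times G$ is triangle-free, and by the cited result \cite[Corollary 5.7]{MR1244934} a triangle-free \po~graph is a pseudoforest; it then transfers ``pseudoforest'' from $K_2\times G$ back to $G$ by observing that when $G$ is bipartite $K_2\times G\cong 2G$, and when $G$ is non-bipartite each odd cycle of $G$ lifts to a cycle of twice the length in the connected graph $K_2\times G$, so two cycles in $G$ would force two cycles in the pseudotree $K_2\times G$. You should replace your induced-minor plan with this citation-based argument (or supply the missing minor-hunting analysis in full).

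There is a second, smaller but real error in your Case~A. From ``both factors are complete and at least one is triangle-free'' you conclude that one factor is $K_2$ and \emph{the other} is $K_1$ or $K_2$; the lemma only constrains one factor, so the other may be $K_n$ for any $n$. Your conclusion $G\cong H\cong K_2$ is false as stated: $K_2\times K_3\cong C_6$ is \po~and falls under condition $(i)$ with $K_3$ a pseudotree, while $K_2\times K_n$ for $n\ge 4$ must be shown \emph{not} to be \po~(again via the triangle-free-implies-pseudoforest fact, since the crown graph $K_2\times K_n$ has more edges than vertices). Neither situation is handled by your Case~A as written. In the paper both of these are absorbed into the single ``one factor is $K_2$'' case, which is another reason to adopt its case split (by whether a factor equals $K_2$, rather than by presence of an induced $P_3$).
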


\begin{proof}
We first show that any of the three conditions $(i)$, $(ii)$, and $(iii)$ is sufficient for $G \times H$ to be $1$-p.o.
Recall that every chordal graph and every graph having a unique induced cycle of order at least $4$~is $1$-p.o.~\cite{MR1244934}.
In particular, this implies that every pseudoforest is $1$-p.o.
Suppose first that $G\cong K_2$ and $H$ is a pseudotree.
If $H$ is bipartite, then $K_2 \times H$ is isomorphic to the pseudoforest $2H$, which is $1$-p.o.
If $H$ is non-bipartite, then it is unicyclic, in which case $K_2 \times H$ is again unicyclic and therefore $1$-p.o.
Finally, $P_3\times P_4$ is isomorphic to $2F$ where $F$ is a unicyclic graph, and is therefore a \po~graph. This also implies that
$P_3\times P_3$ is \po

To show necessity, suppose that $G \times H$ is $1$-p.o. We consider two cases depending on whether
one of $G$ and $H$ is isomorphic to $K_2$ or not.
Suppose first that one of $G$ and $H$, say $G$, is isomorphic to $K_2$. Then $K_2 \times H$ is
%connected~\cite[Theorem 5.9]{opac-b1132990} and
triangle-free,
%bipartite (as we will see below, each cycle of $H$ maps to
%an even cycle in the product; moreover, each cycle in the product arises this way).
%In particular, $K_2\times H$ is triangle-free
and it follows from~\cite[Corollary 5.7]{MR1244934} that $K_2 \times H$ is a pseudoforest.
If $H$ is bipartite, then the product $K_2 \times H$ is isomorphic to $2H$, therefore $H$ is a connected \po~bipartite graph, and by~\cite[Corollary
5.7]{MR1244934} $H$ must be a pseudotree.
Suppose now that $H$ is non-bipartite. Then, $K_2 \times H$ is connected~\cite[Theorem 5.9]{opac-b1132990} and hence a pseudotree.
Let us observe that in this case $H$ must be a unicyclic graph (and hence a pseudotree). Indeed, if $H$ has a cycle $(v_1, \ldots, v_k)$ (for some odd $k$) then
$K_2\times H$ has a cycle of length $2k$ formed by vertices $(u_1,v_1), (u_2, v_2), (u_1, v_3), \ldots, (u_1, v_k), (u_2,v_1),
(u_1,v_2), (u_2, v_3), \ldots, (u_2,v_k)$, where $u_1$ and $u_2$ are the two vertices of the $K_2$. Therefore if $H$ had more than one
cycle, then so would $K_2  \times H$, and we know that this is not the case.

Now consider the case when both $G$ and $H$ have at least $3$ vertices. By Lemma~\ref{lem:forbidden-direct}, at least one
of $G$ and $H$, say $G$, is triangle-free. Since $G$ has at least $3$ vertices, it contains an induced $P_3$.
Applying Lemma~\ref{lem:forbidden-direct} further, we infer that $H$ is $\{{\it claw}, C_3, C_4, C_5, P_5\}$-free.
Since $H$ is $\{{\it claw}, C_3\}$-free, it is of maximum degree at most $2$, thus a path or a cycle. Since $H$ is also
$\{C_4,C_5,P_5\}$-free and connected, we conclude that $H$ is a path with either $3$ or $4$ vertices.
If $H \cong P_4$, then $G$ is $P_4$-free by Lemma~\ref{lem:forbidden-direct}, and since it contains a $P_3$, we must have $G \cong P_3$. If $H\cong P_3$,
then applying the same arguments as above we obtain that $G \cong P_3$ or $G \cong P_4$.
This concludes the proof of the forward implication, and with it the proof of the theorem.
\end{proof}

We now characterize the general case.
To describe the result, the following notion will be convenient.
For a positive integer $k$, we say that a {\it $k$-linear forest} is a disjoint union of paths each having at most $k$ vertices.
In particular, $1$-linear forest are exactly the edgeless graphs, and $2$-linear forests are exactly the graphs consisting
only of isolated vertices and isolated edges.

\begin{theorem}\label{thm:direct}
A nontrivial direct product, $G \times H$, of two graphs $G$ and $H$ is \po~if and only if one of the following conditions holds:
\begin{enumerate}[(i)]
\item $G$ is a $1$-linear forest and $H$ is any graph, or vice versa.
\item $G$ is a $2$-linear forest and $H$ is a pseudoforest, or vice versa.
%\item $G\cong pK_1+qK_2$ for some $p,q\ge 0$ and $H$ is a pseudoforest, or vice versa.
%\item $G\cong \sum_{i = 1}^4 s_iP_i$ and $H\cong \sum_{i = 1}^4 t_iP_i$ for some
%$s_i, t_i\ge 0$ with $s_4+t_4\le 1$.
\item $G$ is a $3$-linear forest and $H$ is a $4$-linear forest, or vice versa.
\end{enumerate}
\end{theorem}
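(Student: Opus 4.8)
The plan is to reduce Theorem~\ref{thm:direct} to the connected case handled in Proposition~\ref{prop:direct}, using the behaviour of the direct product under disjoint unions together with the known closure properties of \po~graphs. First I would record the basic fact that if $G=\bigcup_i G_i$ and $H=\bigcup_j H_j$ are the decompositions of $G$ and $H$ into connected components, then $G\times H$ is the disjoint union of the graphs $G_i\times H_j$; by Corollary~\ref{cor:connected}, $G\times H$ is \po~if and only if every $G_i\times H_j$ is \po. When $G_i$ or $H_j$ is trivial (a single vertex), the product $G_i\times H_j$ is edgeless, hence \po; so the only constraints come from pairs of \emph{nontrivial} components, and a pair $(G_i,H_j)$ of nontrivial connected graphs gives a \po~product exactly when it falls into one of the three cases of Proposition~\ref{prop:direct}.

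The main work is then a bookkeeping argument translating ``every nontrivial pair $(G_i,H_j)$ is on the Proposition~\ref{prop:direct} list'' into the clean global statement ``$(G,H)$ is on the Theorem~\ref{thm:direct} list''. For the ``if'' direction this is easy: in case $(i)$ one factor has no edges, so every nontrivial product $G_i\times H_j$ is empty; in case $(ii)$ every nontrivial $G_i$ is a $K_2$ and every nontrivial $H_j$ is a pseudotree (a connected pseudoforest), so each nontrivial product is covered by Proposition~\ref{prop:direct}(i); in case $(iii)$ every nontrivial $G_i$ is $P_2$ or $P_3$ and every nontrivial $H_j$ is $P_2$, $P_3$, or $P_4$ — here I would note that the genuinely dangerous combination $P_3\times P_3$, $P_3\times P_4$ and $P_2\times(\text{pseudotree})$ are all on the list, and in fact a $3$-linear forest is in particular a $2$-linear forest, so the pairs ($3$-linear, $\le 2$-vertex) reduce to case $(ii)$-type products. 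I would spell out each of the finitely many component-pair types and cite the relevant clause of Proposition~\ref{prop:direct}.

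For the ``only if'' direction, assume $G\times H$ is \po~and neither $G$ nor $H$ is edgeless (otherwise we are in case $(i)$). Pick nontrivial components $G_i\subseteq G$ and $H_j\subseteq H$; by the reduction above $G_i\times H_j$ is \po, so $(G_i,H_j)$ satisfies Proposition~\ref{prop:direct}. I would split on whether \emph{some} nontrivial component of $G$ (or of $H$) has at least $3$ vertices. If every nontrivial component of both $G$ and $H$ is a $K_2$, then every nontrivial $G_i$ is $K_2$ and, pairing it with an arbitrary nontrivial $H_j$, Proposition~\ref{prop:direct}(i) forces $H_j$ to be a pseudotree; symmetrically every nontrivial $G_i$ is a pseudotree ($=K_2$), so $G$ is a $2$-linear forest and $H$ is a pseudoforest — case $(ii)$ (and symmetrically). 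If instead some nontrivial component, say $G_{i_0}$, has $\ge 3$ vertices, then for every nontrivial $H_j$ the pair $(G_{i_0},H_j)$ must be of type $(ii)$ or $(iii)$ of Proposition~\ref{prop:direct}, which forces every nontrivial $H_j\in\{P_3,P_4\}$; feeding this back, each nontrivial $G_i$ paired with a $P_3$ or $P_4$ must again be $P_3$ or $P_4$, and then requiring compatibility with \emph{all} of them (Proposition~\ref{prop:direct} has no $P_4\times P_4$ entry) pins one side down to $3$-vertex paths and the other to $\le 4$-vertex paths, i.e.\ case $(iii)$.

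The main obstacle I anticipate is purely combinatorial rather than conceptual: ruling out ``mixed'' configurations where different components would individually be allowed but cannot coexist — the prototype being that $G$ has one component $P_4$ and another component $P_4$ while $H$ has a component $P_3$, which is consistent pair-by-pair with Proposition~\ref{prop:direct} but whose presence of two $P_4$'s on one side is exactly what the ``$3$-linear forest vs.\ $4$-linear forest'' asymmetry forbids; one has to check that if $H$ contains a $P_4$ component then \emph{no} component of $G$ may have more than $3$ vertices, which follows because $P_4\times P_4$ is not \po~(it contains an induced domino, as noted in Lemma~\ref{lem:forbidden-direct}). Carefully organizing this finite case analysis so that the asymmetric roles of the two factors emerge correctly is the only delicate point; everything else is a direct appeal to Corollary~\ref{cor:connected} and Proposition~\ref{prop:direct}.
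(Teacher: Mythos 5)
Your overall strategy is the same as the paper's: decompose $G\times H$ into the products $G_i\times H_j$ of components, observe that being $1$-p.o.\ is equivalent to every such product being $1$-p.o., and invoke Proposition~\ref{prop:direct} pair by pair. The ``if'' direction is fine. The problem is the case split in the ``only if'' direction. Your second branch assumes only that \emph{some} nontrivial component $G_{i_0}$ of $G$ has at least $3$ vertices, and from this you claim that for every nontrivial component $H_j$ of $H$ the pair $(G_{i_0},H_j)$ must fall under clause $(ii)$ or $(iii)$ of Proposition~\ref{prop:direct}, ``which forces every nontrivial $H_j\in\{P_3,P_4\}$''. This step fails when $H_j\cong K_2$: then the pair is covered by clause $(i)$ of Proposition~\ref{prop:direct} with the roles reversed ($H_j\cong K_2$ and $G_{i_0}$ a pseudotree), and nothing forces $H_j$ to be a longer path. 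Concretely, take $G=C_5$ and $H=K_2$: the product is $C_{10}$, which is $1$-p.o., your second branch applies since a component of $G$ has at least $3$ vertices, yet the conclusion ``every nontrivial $H_j\in\{P_3,P_4\}$'' is false, and this pair belongs to condition $(ii)$ of the theorem (with $H$ playing the role of the $2$-linear forest), not to condition $(iii)$.

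The missing intermediate case is exactly the one the paper inserts: first show that \emph{every} component of each factor is a pseudotree (by pairing it with a $K_2$ inside the other factor, which is available once both factors have an edge), then dispose of the case in which one of $G,H$ is a $2$-linear forest --- that is condition $(ii)$ --- and only \emph{then} assume that both $G$ and $H$ have a component on at least $3$ vertices, at which point Proposition~\ref{prop:direct} really does force all such components into $\{P_3,P_4\}$ with at most one side containing a $P_4$. With that extra step your argument goes through; as written, the dichotomy ``all nontrivial components on both sides are $K_2$'' versus ``some component has at least $3$ vertices'' is not fine enough. (A small additional slip: in the ``if'' direction you assert that a $3$-linear forest is in particular a $2$-linear forest; the inclusion goes the other way, though the surrounding claim --- that a $P_3$ component times a component on at most $2$ vertices is covered by the $K_2\times(\text{pseudotree})$ clause --- is correct.)
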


\begin{proof}
Suppose first that $G \times H$ is $1$-p.o.
If at least one of $G$ and $H$ is edgeless, then condition $(i)$ holds. Assume now that both $G$ and $H$ contain an edge.
We claim that every component $C$ of $G$ is a pseudotree (and by symmetry, the same conclusion will hold for components of $H$). This is
a consequence of Proposition~\ref{prop:direct}, using
the fact that $C\times K_2$ is an induced subgraph of $G \times H$ (and hence \po).
Thus, if $G$ is a $2$-linear forest, then condition $(ii)$ holds (and similarly for $H$).
Assume now that both $G$ and $H$ have a component with at least three vertices.
Fixing two such components, say $C$ and $D$, of $G$ and $H$, respectively, and applying Proposition~\ref{prop:direct}, we infer that
each of $C$ and $D$ is a path of order $3$ or $4$, and not both can be isomorphic to $P_4$.
Consequently, $G$ and $H$ are of the form specified in condition $(iii)$.

For the converse direction, we will show that in any of the three cases $(i)$, $(ii)$, and $(iii)$, the graph $G\times H$ is $1$-p.o.
If condition $(i)$ holds, then $G\times H$ is edgeless and thus $1$-p.o.
Suppose that
condition $(ii)$ holds, say $G$ is a $2$-linear forest and $H$ is a pseudoforest.
Since $G\times H$ is the disjoint union of graphs of the form $C\times D$ where $C$ and $D$ are components of $G$ and $H$, respectively,
and \po~graphs are closed under disjoint union (Proposition~\ref{prop:operations}), it suffices to show that for every such pair $C$ and $D$,
the graph $C\times D$ is $1$-p.o. If $C\cong K_1$ then $C\times D$ is edgeless, hence $1$-p.o., while if $C\cong K_2$ then
$C\times D$ is a product of $K_2$ with a pseudotree, and hence \po~by Proposition~\ref{prop:direct}.
Finally, if condition $(iii)$ holds, a similar approach shows that $C\times D$ is either edgeless,
a product of $K_2$ with a pseudotree, or isomorphic to either $P_3\times P_3$ or $P_3\times P_4$, hence in either case \po~by Proposition~\ref{prop:direct}.
It
follows that $G\times H$ is \po~as well.
\end{proof}

\section{\po~strong product graphs}

In this section, we characterize nontrivial strong product graphs that are $1$-p.o.
The \emph{strong product} $G \boxtimes H$ of graphs $G$ and $H$ is the graph with vertex set $V(G) \times V(H)$ in which two distinct
vertices $(u,v)$ and $(u',v')$ are adjacent if and only if
\begin{enumerate}[(a)]
\item $u$ is adjacent to $u'$ in $G$ and $v=v'$, or
\item $u=u'$ and $v$ is adjacent to $v'$ in $H$, or
\item $u$ is adjacent to $u'$ in $G$ and $v$ is adjacent to $v'$ in $H$.
\end{enumerate}
It is easy to see that the fact that one of the conditions (a), (b) and (c) holds is equivalent to the pair of conditions
$u' \in N_{G}[u] $ and $v' \in N_{H}[v]$, that is, that $(u',v')\in N_{G}[u]\times N_{H}[v]$.
Consequently, for every two vertices $u \in V(G)$ and $v \in V(H)$, we have $N_{G \boxtimes H}[(u,v)]=N_{G}[u] \times N_{H}[v]$.

The strong product of two graphs is commutative, in the sense that $G \boxtimes H \cong H \boxtimes G$.
%By~\cite[Corollary 5.6]{opac-b1132990}, the
%strong product of two graphs $G$ and $H$ is connected if and only if both factors are connected. More specifically, if $G$ has components $G_1,
%\ldots, G_k$ and $H$ has components $H_1, \ldots, H_\ell$, then the components of $G \boxtimes H$ are exactly $G_{i} \boxtimes H_{j}$ for $i= 1,
%\ldots, k$ and $j= 1, \ldots, \ell$. We will first analyze the connected case. The general case follows easily from the connected one by applying
%Corollary~\ref{cor:connected}.

\begin{sloppypar}
Our characterization of \po~strong product graphs will be proved in several steps.
In Section~\ref{subsec:prelim}, we state two preliminary lemmas on the strong product
and give two necessary conditions for \po~strong product graphs.
The necessary conditions motivate the development of a structural characterization of
$\{P_5, C_4, C_5, {\it claw}, {\it bull}\}$-free graphs.
This is done in Section~\ref{subsec:co-chain},
where connected $\{P_5, C_4, C_5, {\it claw}, {\it bull}\}$-free graphs
are shown to be precisely the connected co-chain graphs.
Connected true-twin-free co-chain graphs are further characterized in
Section~\ref{subsec:rafts}, and form the basis of an infinite family of \po~strong product
graphs described in Section~\ref{subsec:sufficient}.
Building on these results, we prove our main result of the section,
Theorem~\ref{thm:strong-general} in Section~\ref{subsec:characterization}, which gives a complete
characterization of \po~strong product graphs.
\end{sloppypar}

%Therefore by Corollary~\ref{cor:connected}, it suffices to consider factors $G$ and $H$ that are connected.

\subsection{Three lemmas}\label{subsec:prelim}

Recall that a vertex $v$ in a graph $G$ is \emph{simplicial} if its neighborhood forms a clique.
In Section~\ref{subsec:sufficient} we  will need the following property of simplicial vertices in relation to the strong product.

\begin{lemma}\label{lem:simplicial}
Let $G$ and $H$ be graphs and let $u$ and $v$ be simplicial vertices in $G$ and $H$, respectively.
Then, vertex $(u,v)$ is simplicial in the strong product $G \boxtimes H$.
\end{lemma}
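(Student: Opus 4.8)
The plan is to unwind the definitions and show directly that the closed neighborhood of $(u,v)$ in $G \boxtimes H$ induces a clique. Recall the key identity stated just before the lemma: for every $u \in V(G)$ and $v \in V(H)$ we have $N_{G \boxtimes H}[(u,v)] = N_G[u] \times N_H[v]$. Since $u$ is simplicial in $G$, the set $N_G[u]$ is a clique in $G$; since $v$ is simplicial in $H$, the set $N_H[v]$ is a clique in $H$. So it suffices to prove the following general fact: if $A$ is a clique in $G$ and $B$ is a clique in $H$, then $A \times B$ is a clique in $G \boxtimes H$.

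To prove that, take two distinct vertices $(a,b)$ and $(a',b')$ of $A \times B$. I would split into cases according to whether $a = a'$ or $a \neq a'$, and whether $b = b'$ or $b \neq b'$ (the case $a=a'$ and $b=b'$ being excluded since the vertices are distinct). If $a = a'$ then $b \neq b'$, and since $B$ is a clique we have $b$ adjacent to $b'$ in $H$, so condition (b) in the definition of the strong product is met and $(a,b)(a',b')$ is an edge. Symmetrically, if $b = b'$ then $a \neq a'$ and condition (a) applies. In the remaining case $a \neq a'$ and $b \neq b'$; since $A$ and $B$ are cliques, $a$ is adjacent to $a'$ in $G$ and $b$ is adjacent to $b'$ in $H$, so condition (c) is satisfied. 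In all cases the two vertices are adjacent, so $A \times B$ is a clique.

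Combining the two observations: $N_{G \boxtimes H}[(u,v)] = N_G[u] \times N_H[v]$ is a product of a clique of $G$ with a clique of $H$, hence a clique of $G \boxtimes H$. In particular the open neighborhood $N_{G \boxtimes H}((u,v))$, being a subset of this clique, is itself a clique, which is exactly what it means for $(u,v)$ to be simplicial in $G \boxtimes H$.

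There is essentially no obstacle here; the lemma is a direct consequence of the neighborhood identity recorded immediately above its statement, and the only content is the routine case analysis showing that the product of two cliques is a clique in the strong product. I would keep the write-up to a few lines, perhaps even folding the two cases $a=a'$ and $b=b'$ together by symmetry and remarking that the adjacency conditions (a), (b), (c) cover precisely the situations that arise when picking two distinct vertices from $N_G[u] \times N_H[v]$.
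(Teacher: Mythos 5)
Your proposal is correct and follows the same route as the paper: both use the identity $N_{G \boxtimes H}[(u,v)] = N_G[u] \times N_H[v]$, note that each factor is a clique by simpliciality, and conclude via the fact that the strong product of two cliques is a clique (which the paper cites as known and you verify by a short case analysis).
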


\begin{proof}
It suffices to show that the closed neighborhood $N_{G \boxtimes H}[(u,v)]$ is a clique in $G \boxtimes H$.
Note that $N_{G \boxtimes H}[(u,v)] = N_G[u]\times N_H[v]$,
the set $N_G[u]$ is a clique in $G$ (since $u$ is simplicial in $G$)
and, similarly, the set $N_H[v]$ is a clique in $H$.
The desired result now follows from the fact that the strong product of two complete graphs is a complete graph.
\end{proof}

Recall also that two distinct vertices $u$ and $v$ in a graph $G$ form a pair of \emph{true twins} if $N_{G}[u] = N_{G}[v]$.
We say that a graph is \emph{true-twin-free} if it contains no pair of true twins.
The next lemma shows that it suffices to characterize \po~strong product graphs in which both factors are true-twin-free.
% which means
%that we can further restrict ourselves to factors containing no true twins.

\begin{lemma}\label{lemm:true-twins}
Let $G,G'$, and $H$ be graphs such that $G'$ is obtained from $G$ by adding a true twin.
Then, $G \boxtimes H$ is \po~if and only if $G' \boxtimes H$ is \po
\end{lemma}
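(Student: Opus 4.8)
\textbf{Proof proposal for Lemma~\ref{lemm:true-twins}.}

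The plan is to prove both implications by exhibiting explicit $1$-perfect orientations, exploiting the fact that the strong product interacts well with adding true twins. Let $G'$ be obtained from $G$ by adding a true twin $w'$ of a vertex $w\in V(G)$, so $N_{G'}[w']=N_{G'}[w]=N_G[w]\cup\{w'\}$. The forward implication is immediate: $G$ is an induced subgraph of $G'$, hence $G\boxtimes H$ is an induced subgraph of $G'\boxtimes H$, and the class of \po~graphs is closed under induced subgraphs (Corollary~\ref{cor:minors}, or directly Proposition~\ref{prop:minors}). So the substance is the converse: assuming $G\boxtimes H$ is \po, show $G'\boxtimes H$ is \po.

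The key observation is that $G'\boxtimes H$ is obtained from $G\boxtimes H$ by, roughly speaking, adding true twins ``row by row''. Concretely, consider the vertices $(w',h)$ for $h\in V(H)$. I would first check that if $|V(H)|=1$ this is literally adding one true twin (the pair $(w,h),(w',h)$ has equal closed neighborhoods in $G'\boxtimes H$ since $N_{G'\boxtimes H}[(w,h)]=N_{G'}[w]\times N_H[h]$ and likewise for $w'$, and $N_{G'}[w]=N_{G'}[w']$), so Proposition~\ref{prop:operations}(b) finishes. For general $H$, however, the vertices $(w',h)$ and $(w,h)$ are \emph{not} true twins in $G'\boxtimes H$ as soon as $h$ has a neighbor $h'$ in $H$: then $(w,h')\in N_{G'\boxtimes H}[(w,h)]$ but one must verify $(w,h')\in N_{G'\boxtimes H}[(w',h)]$ as well --- which is actually true, since $w\in N_{G'}[w']$ and $h'\in N_H[h]$. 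In fact $N_{G'\boxtimes H}[(w',h)]=N_{G'}[w']\times N_H[h]=N_{G'}[w]\times N_H[h]=N_{G'\boxtimes H}[(w,h)]$ for \emph{every} $h$, so the whole ``slice'' $\{w'\}\times V(H)$ consists of vertices each of which is a true twin of the corresponding vertex in $\{w\}\times V(H)$. I would therefore argue: order $V(H)=\{h_1,\dots,h_k\}$ and add the vertices $(w',h_1),\dots,(w',h_k)$ one at a time to $G\boxtimes H$; when adding $(w',h_i)$, its intended closed neighborhood in the graph built so far is exactly $N_{G'}[w]\times N_H[h_i]$ with $\{w'\}$-coordinates restricted to $\{(w',h_j):j<i\}$, which coincides with the closed neighborhood of the already-present vertex $(w,h_i)$ together with those earlier-added twins --- i.e.\ $(w',h_i)$ is a true twin of $(w,h_i)$ in the current graph. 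Hence each step is an instance of ``adding a true twin'', and by Proposition~\ref{prop:operations}(b) applied $k$ times, starting from the \po~graph $G\boxtimes H$, we conclude $G'\boxtimes H$ is \po.

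The main obstacle --- and the only place care is genuinely needed --- is verifying that at each intermediate stage the vertex being added really is a true twin of $(w,h_i)$, i.e.\ that the adjacencies among the $\{w,w'\}\times V(H)$ slices are ``symmetric'' in $w$ and $w'$. This boils down to the identity $N_{G'\boxtimes H}[(w',h)]=N_{G'}[w']\times N_H[h]$ combined with $N_{G'}[w']=N_{G'}[w]$, which holds because $w'$ is a true twin of $w$ in $G'$; the subtlety is just to note that this includes the edge $ww'$ itself, so that $(w',h)\sim(w,h')$ whenever $hh'\in E(H)$ or $h=h'$, exactly as for $(w,h)$. Once this symmetry is recorded, the inductive ``add true twins one at a time'' argument goes through routinely, and the lemma follows.
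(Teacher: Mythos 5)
Your proposal is correct and follows essentially the same route as the paper: the forward direction via closedness under induced subgraphs, and the converse via the identity $N_{G'\boxtimes H}[(w',h)]=N_{G'}[w']\times N_H[h]=N_{G'}[w]\times N_H[h]$, showing that $G'\boxtimes H$ arises from $G\boxtimes H$ by a sequence of true twin additions. Your extra verification that each $(w',h_i)$ remains a true twin of $(w,h_i)$ at every intermediate stage is a point the paper leaves implicit, and it is correct.
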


\begin{proof}
Note that $G \boxtimes H$ is an induced subgraph of $G' \boxtimes H$. Therefore, by Proposition~\ref{prop:minors},
if $G' \boxtimes H$ is $1$-p.o., then so is $G \boxtimes H$.

Suppose now that $G \boxtimes H$ is $1$-p.o., and that $G'$ was obtained from $G$ by adding to it a true twin $x'$ to a vertex $x$ of $G$.
Note that for every $v\in V(H)$, we have $N_{G' \boxtimes H}[(x,v)]=N_{G'}[x] \times N_{H}[v]$ and $N_{G' \boxtimes H}[(x',v)]=N_{G'}[x'] \times N_{H}[v]$.
Since
$N_{G'}[x]=N_{G'}[x']$, each vertex of the form $(x',v)$ for $v \in V(H)$ is a true twin in $G' \boxtimes H$ of vertex $(x,v)$. It follows that $G'
\boxtimes H$ can be obtained from $G \boxtimes H$ by a sequence of true twin additions. By Proposition~\ref{prop:operations}, $G' \boxtimes
H$ is \po
\end{proof}

A similar approach as for the direct product (Lemma~\ref{lem:forbidden-direct})
gives the following necessary conditions for the strong product of two graphs to be \po

\begin{lemma}\label{lemm:forbiden}
Suppose that the strong product of two graphs $G$ and $H$ is $1$-p.o. Then:
\begin{enumerate}
\item If one of $G$ and $H$ contains an induced $P_3$, then the other one is $\{P_5, C_4, C_5, {\it claw}, {\it bull}\}$-free.
\item At least one of $G$ and $H$ is $P_4$-free.
\end{enumerate}
\end{lemma}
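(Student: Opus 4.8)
The plan is to mimic the argument used for Lemma~\ref{lem:forbidden-direct}: exhibit, for each forbidden configuration, a small induced subgraph of the relevant strong product that is already known not to be \po~(one of $F_1,\dots,F_4$ or $K_{2,3}$), and then invoke Corollary~\ref{cor:minors}. Concretely, for part~1 I would assume $G$ contains an induced $P_3$, so that $P_3\boxtimes X$ is an induced subgraph of $G\boxtimes H$ whenever $X$ is an induced subgraph of $H$; it then suffices to show that each of $P_3\boxtimes C_4$, $P_3\boxtimes C_5$, $P_3\boxtimes P_5$, $P_3\boxtimes{\it claw}$, and $P_3\boxtimes{\it bull}$ contains an induced copy of some $F_i$ (or of $K_{2,3}$, which contains $F_2$ as an induced minor, or more simply is not \po~since, being bipartite and non-chordal with a forbidden minor, it reduces to a known case — in fact $K_{2,3}=F$? one should just point to $K_{2,3}$ being an induced minor of the $F_i$'s or note it is handled in the cited results). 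For part~2, since $P_4\boxtimes P_4$ contains a large grid-like structure, I would show it contains an induced domino $F_1$ (or $\overline{C_6}$), exactly as in the direct-product case; if one of $G,H$ is not $P_4$-free, then $P_4\boxtimes P_4$ is an induced subgraph of $G\boxtimes H$ and we are done.

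The key steps, in order, are: (1) record the reduction principle that $A\boxtimes B$ is an induced subgraph of $G\boxtimes H$ whenever $A$ is an induced subgraph of $G$ and $B$ of $H$ — this is immediate from the definition of the strong product, since the vertex set of $A\times B$ is $V(A)\times V(B)\subseteq V(G)\times V(H)$ and adjacency is inherited via closed neighborhoods; (2) for each of the five graphs $X\in\{C_4,C_5,P_5,{\it claw},{\it bull}\}$, draw $P_3\boxtimes X$ and locate an induced subgraph on five (or six) vertices isomorphic to one of $F_1,F_2,F_3,F_4$ — I expect $P_3\boxtimes{\it claw}$ and $P_3\boxtimes C_4$ to yield an induced $K_{2,3}$-type obstruction, while $P_3\boxtimes C_5$, $P_3\boxtimes P_5$, and $P_3\boxtimes{\it bull}$ should yield an induced $F_2$, paralleling Figures~\ref{fig:5} and~\ref{fig:6}; (3) handle $P_4\boxtimes P_4$ by exhibiting an induced domino; (4) assemble via Corollary~\ref{cor:minors}.

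The main obstacle will be the explicit verification in step~(2): the strong product $P_3\boxtimes X$ can be fairly dense (each vertex of $X$ is ``blown up'' along a $P_3$ with extra diagonal edges), so one must carefully choose the five- or six-vertex subsets and check both the required edges and, crucially, the required non-edges to certify the induced subgraph is exactly the intended $F_i$. The $P_3\boxtimes{\it bull}$ case is likely the most delicate, since the bull itself already contains an induced $P_4$ and a triangle, so $P_3\boxtimes{\it bull}$ is comparatively rich in small obstructions; I would look for a copy of $F_2$ using the triangle of the bull together with one of its pendant vertices and two ``layers'' of the $P_3$. As in the source lemma, once the pictures are in place the argument is just a pointer to Corollary~\ref{cor:minors}, so the proof body can be kept to a paragraph plus three figures analogous to Figures~\ref{fig:5}--\ref{fig:7}.
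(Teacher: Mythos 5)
Your proposal follows essentially the same route as the paper: reduce to the six small products $P_3\boxtimes X$ for $X\in\{C_4,C_5,P_5,{\it claw},{\it bull}\}$ and $P_4\boxtimes P_4$, exhibit a known non-\po~obstruction in each, and invoke Corollary~\ref{cor:minors}. The only detail worth noting is that for $P_3\boxtimes C_4$, $P_3\boxtimes C_5$, $P_3\boxtimes{\it claw}$, and $P_3\boxtimes{\it bull}$ the paper finds $K_{2,3}$ as an induced \emph{minor} rather than an induced subgraph (the strong product is too dense for the latter), which is still covered by Corollary~\ref{cor:minors} and consistent with your hedged phrasing.
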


\begin{proof}
We can verify that each of the graphs $P_3 \boxtimes C_4$, $P_3 \boxtimes C_5$, $P_3 \boxtimes claw$, and $P_3 \boxtimes bull$ has
$K_{2,3}$ (the first element of family ${\cal F}_2$, see Fig.~\ref{fig:1}) as induced minor, that $P_3 \boxtimes P_5$
contains an induced copy of $F_2$, and that $P_4 \boxtimes P_4$ contains an induced copy of $F_1$.
Therefore, by Corollary~\ref{cor:minors}, none of these graphs is \po~We can observe such induced minors in Figure~\ref{fig:3}.

\begin{figure}[h!]
  \centering
\includegraphics[width=0.85\textwidth]{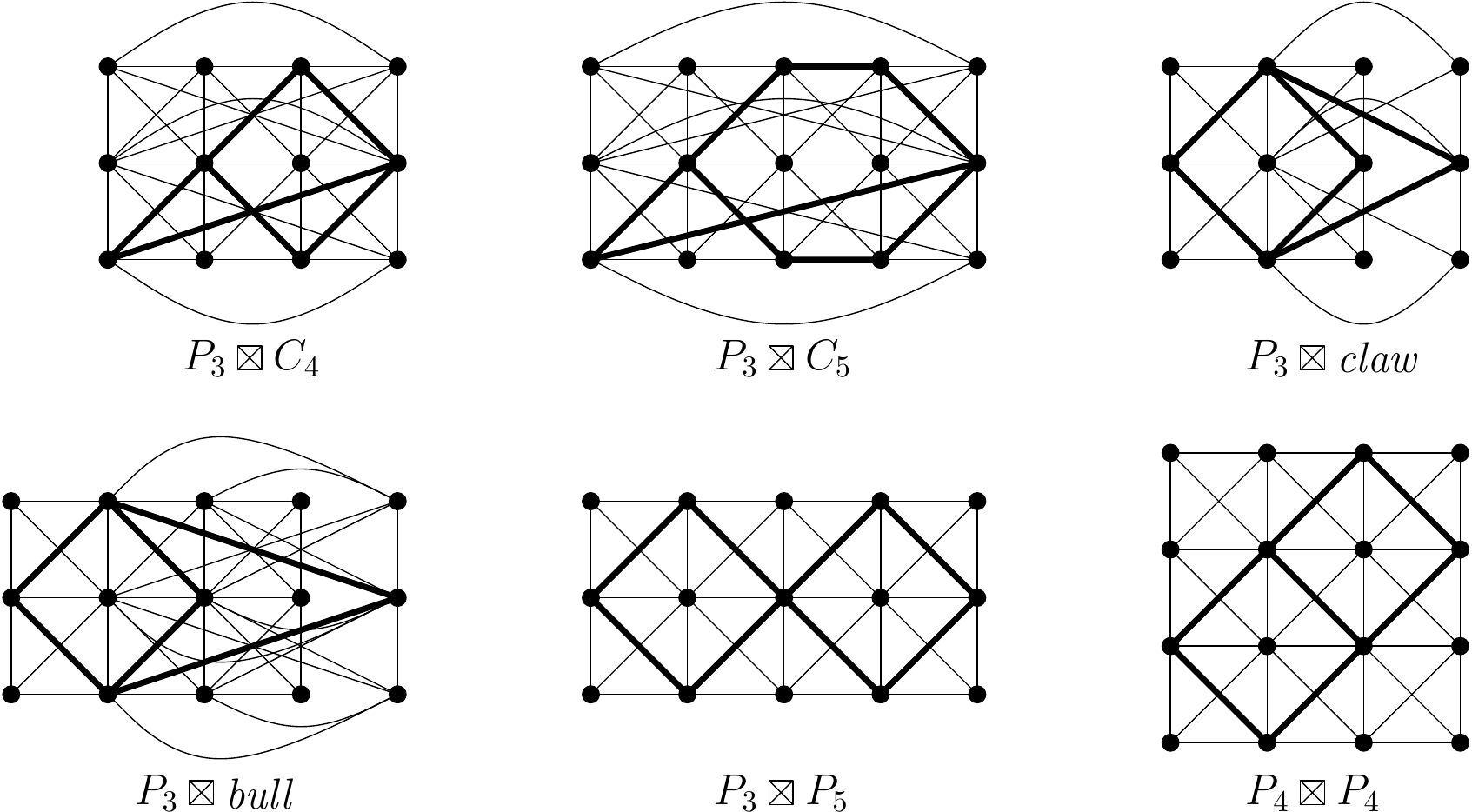}
\caption{$K_{2,3}$ as induced minor of $P_3 \boxtimes C_4$, $P_3 \boxtimes C_5$, $P_3 \boxtimes claw$, $P_3 \boxtimes bull$,
$F_2$ as induced subgraph of $P_3 \boxtimes P_5$, and
the domino ($F_1$) as induced subgraph of $P_4 \boxtimes P_4$.}\label{fig:3}
\end{figure}

%Suppose that both factors contain an induced $P_3$ but not both of them are
%$\{P_5, C_4, C_5, {\it claw}, {\it bull}\}$-free. Then,
%one of them contains an induced graph from the set $\{P_5, C_4, C_5, {\it claw}, {\it bull}\}$.
The lemma now follows from the above observations and Corollary~\ref{cor:minors}.
\end{proof}

Lemma~\ref{lemm:forbiden} motivates the development of structural characterizations of $P_3$-free graphs, of $P_4$-free graphs, and of
$\{P_5, C_4, C_5, {\it claw}, {\it bull}\}$-free graphs. $P_3$-free graphs are precisely the disjoint union of complete graphs.
$P_4$-free graphs (also known as {\it cographs}) are also well understood: they are precisely the graphs that can be obtained from copies of $K_1$ by applying a sequence of the disjoint union and join operations~\cite{MR619603}.
The $\{P_5, C_4, C_5, {\it claw}, {\it bull}\}$-free graphs are characterized in the next section.

\subsection{The structure of $\{P_5, C_4, C_5, {\it claw}, {\it bull}\}$-free graphs}\label{subsec:co-chain}

Our characterization of $\{P_5, C_4, C_5, {\it claw}, {\it bull}\}$-free graphs will rely on the notion of co-chain graphs.
A~graph $G$ is a {\it co-chain graph} if its vertex set can be partitioned into two cliques, say $X$ and $Y$, such that
the vertices in $X$ can be ordered as $X = \{x_1,\ldots, x_{|X|}\}$ so that for all $1\le i<j\le |X|$, we have
$N[x_i]\subseteq N[x_j]$ (or, equivalently, $N(x_i)\cap Y\subseteq N(x_j)\cap Y$).
The pair $(X,Y)$ will be referred to as a {\it co-chain partition} of $G$.
The following observation is an immediate consequence of the definitions.

\begin{proposition}\label{prop:co-chain-TTA}
The set of co-chain graphs is closed under true twin additions and universal vertex additions.
\end{proposition}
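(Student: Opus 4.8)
The plan is to start from a co-chain partition $(X,Y)$ of $G$ together with an ordering $x_1,\dots,x_{|X|}$ of $X$ witnessing the nested condition ($N(x_i)\cap Y\subseteq N(x_j)\cap Y$ for $i<j$), and to show directly that this data extends to a co-chain partition of the augmented graph $G'$ in each of the two cases. Recalling that $X$ is a clique, the condition on the ordering is equivalent to $N[x_i]\subseteq N[x_j]$, and this is the formulation I would track.

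For adding a true twin $v'$ of a vertex $v\in V(G)$, I would split on whether $v\in X$ or $v\in Y$. In the first case I put $v'$ into $X$; since $v'$ is adjacent to $v$ and to all neighbors of $v$, and $X\setminus\{v\}\subseteq N_G(v)$ because $X$ is a clique, the set $X':=X\cup\{v'\}$ is again a clique, while $Y$ is untouched. I then insert $v'$ next to $v$ in the ordering; the required chain of inclusions is preserved because $N_{G'}(v')\cap Y=N_{G'}(v)\cap Y$ and because adding $v'$ to $X$ does not alter $N_{G'}(x_i)\cap Y$ for the old vertices. In the second case I instead put $v'$ into $Y$, which is again a clique by the same adjacency argument, and I keep the ordering of $X$ unchanged; for $i<j$ one checks $N_{G'}(x_i)\cap Y'\subseteq N_{G'}(x_j)\cap Y'$ using that $x_i$ is adjacent to $v'$ exactly when it is adjacent to $v$ (as $v'$ is a true twin of $v$), so the new element $v'$ is swept along the chain just as $v$ already was.

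For adding a universal vertex $v$, I would place $v$ into $X$ and order $X':=X\cup\{v\}$ as $x_1,\dots,x_{|X|},v$, making $v$ the largest element (taking $X=\emptyset$, hence $X'=\{v\}$, if $G$ is complete). Then $X'$ is a clique, $Y$ is unchanged, and for $i<j\le|X|$ the inclusion $N_{G'}(x_i)\cap Y\subseteq N_{G'}(x_j)\cap Y$ is exactly the one holding in $G$, while $N_{G'}(x_i)\cap Y\subseteq Y=N_{G'}(v)\cap Y$ since $v$ is adjacent to all of $Y$. Hence $(X',Y)$ is a co-chain partition of $G'$. The argument is essentially pure bookkeeping, as the statement suggests; the only point that needs a second of care is the subcase where the twinned vertex lies in $Y$, where one must confirm that adding its twin to $Y$ does not disturb the ordering already fixed on $X$ — which is immediate from the fact that true twins have identical neighborhoods.
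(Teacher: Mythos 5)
Your proof is correct and is exactly the routine verification that the paper leaves implicit: the paper offers no argument at all, simply declaring the proposition ``an immediate consequence of the definitions.'' Your case analysis (twin in $X$, twin in $Y$, universal vertex appended as the maximum of the ordering on $X$), tracked via the formulation $N(x_i)\cap Y\subseteq N(x_j)\cap Y$, fills in that bookkeeping correctly and completely.
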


The following structural characterization of connected $\{P_5, C_4, C_5, {\it claw}, {\it bull}\}$-free graphs can
also be seen as a forbidden induced subgraph characterization of co-chain graphs within connected graphs.

\begin{theorem}\label{prop:co-chain}
A connected graph $G$ is $\{P_5, C_4, C_5, {\it claw}, {\it bull}\}$-free if and only if it is co-chain.
\end{theorem}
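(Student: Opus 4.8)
The plan is to prove both directions, with the "if" direction being routine and the "only if" direction requiring the real work. For the easy direction, I would show that every co-chain graph is $\{P_5, C_4, C_5, \textit{claw}, \textit{bull}\}$-free. Since a co-chain graph has its vertex set partitioned into two cliques $X$ and $Y$ with the neighborhoods of $X$-vertices nested (and symmetrically one can nest the $Y$-vertices), the complement is a chain graph, i.e. a $2K_2$-free bipartite graph. So it suffices to observe that each of $P_5$, $C_4$, $C_5$, the claw, and the bull has a complement that is either nonbipartite or contains an induced $2K_2$: $\overline{P_5} = P_5 \supseteq 2K_2$ (well, $\overline{P_5}$ is the "house", which contains a triangle), $\overline{C_4} = 2K_2$, $\overline{C_5} = C_5$ (nonbipartite), $\overline{\textit{claw}} = K_3 + K_1$ (nonbipartite), $\overline{\textit{bull}} = \textit{bull}$ (nonbipartite). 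Hence none of them is co-chain, and co-chain graphs are hereditary, so they are $\{P_5, C_4, C_5, \textit{claw}, \textit{bull}\}$-free. (Connectedness is not even needed here.)

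For the converse, let $G$ be connected and $\{P_5, C_4, C_5, \textit{claw}, \textit{bull}\}$-free; I want to produce a co-chain partition. First I would record some consequences of the forbidden subgraphs: claw-freeness means every neighborhood induces a graph of independence number at most $2$, i.e. the complement of a neighborhood is triangle-free; $C_4$-freeness further restricts how two nonadjacent vertices can share neighbors. The natural approach is via the diameter. A $\{P_5, C_4, C_5\}$-free connected graph has diameter at most $2$: any induced path or induced cycle of length $\geq 4$ is forbidden, so if the diameter were $\geq 3$ there would be an induced $P_4$ between two far vertices, and then... actually I'd argue a shortest path of length $\geq 3$ together with chord analysis yields an induced $P_5$, $C_4$, or $C_5$. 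With diameter $\leq 2$ in hand, pick a vertex $v$ of maximum degree and consider $X := N[v]$ and $Y := V(G) \setminus X$. The goal is to show (after possibly re-choosing $v$ cleverly, or using the bull-freeness) that both $X$ and $Y$ are cliques and the nesting property holds.

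The key structural steps I expect are: (1) show $Y$ is a clique — here $C_4$-freeness and the fact that $v$ has maximum degree should force any two nonneighbors of $v$ to be adjacent, else we build a $C_4$ or contradict maximality; (2) show $X = N[v]$ is a clique, which is where claw-freeness and bull-freeness come in — if $X$ contained a non-edge $ab$, then since $Y$ is a clique and $G$ is connected with diameter $2$, some vertex of $Y$ (or the interaction of $a$, $b$ with $Y$ and $v$) would create a claw or bull; (3) establish the nesting $N[x_i] \subseteq N[x_j]$ by ordering the vertices of $X$ by their number of neighbors in $Y$ and using $C_4$-freeness (two vertices of $X$ with incomparable $Y$-neighborhoods would give an induced $C_4$ using the witnesses in $Y$ together with $v$), plus $P_5$- and bull-freeness to kill the remaining configurations. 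Symmetrically one obtains the ordering on the $Y$-side, though only one side is needed for the definition.

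The main obstacle will be step (2), showing that the closed neighborhood of a maximum-degree vertex is a clique: this is false for $\{P_5, C_4, C_5, \textit{claw}\}$-free graphs in general (the bull itself, or split graphs, are counterexamples), so the bull-freeness must be used in an essential and slightly delicate way, and it may be necessary to first pass to a more carefully chosen vertex $v$ (for instance, one maximizing $|N[v]|$ among all vertices, and among those, maximizing something else) or to argue by contradiction taking a minimal counterexample $G$ and deleting a simplicial-like vertex. I would handle it by assuming $a, b \in N[v]$ are nonadjacent, noting $a, b \neq v$, and then doing a case analysis on the neighbors of $a$ and $b$ outside $N[v]$: if both $a$ and $b$ have a private neighbor in $Y$ we extract a configuration forbidden by $P_5$ or $C_5$; if say $a$ has no neighbor in $Y$, then since $\deg(a) \leq \deg(v)$ and $N[a] \subseteq N[v]$, comparing degrees forces structure that eventually produces a bull on $\{v, a, b, \text{witness}, \text{witness}\}$ or contradicts maximality. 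Once step (2) is secured, steps (1) and (3) are comparatively mechanical applications of $C_4$-freeness.
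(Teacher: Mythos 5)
Your ``if'' direction is fine and essentially matches the paper's (each of the five graphs fails to be co-chain). The ``only if'' direction, however, has concrete errors and leaves the hardest steps unproved. First, the claimed diameter bound is false: $P_4$ is a connected $\{P_5, C_4, C_5, \textit{claw}, \textit{bull}\}$-free (indeed co-chain, as $P_4\cong R_1$) graph of diameter $3$; $P_5$-freeness only bounds the diameter by $3$, since an induced $P_4$ between two vertices at distance $3$ is not forbidden. Second, your step (2) is false as stated: in $P_4=(a,b,c,d)$ the maximum-degree vertex $b$ has $N[b]=\{a,b,c\}$, which is not a clique, so the partition $(N[v],\,V\setminus N[v])$ for a maximum-degree $v$ is not a co-chain partition in general. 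The correct choice would be a simplicial vertex (whose existence in this graph class itself requires proof), and you neither identify this nor carry out the case analysis you defer to --- you explicitly flag step (2) as the main obstacle and offer only a sketch of how a contradiction ``eventually'' appears. As it stands the necessity direction is a plan with its critical steps missing and two of its announced intermediate claims refuted by $P_4$.

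For comparison, the paper avoids constructing a partition altogether: it proves that a connected $\{P_5, C_4, C_5, \textit{claw}, \textit{bull}\}$-free graph is $3K_1$-free (a short distance-based case analysis on a hypothetical independent triple, using $P_5$-, claw- and bull-freeness), and then invokes the known characterization of co-chain graphs as exactly the $\{3K_1, C_4, C_5\}$-free graphs. If you want to salvage a self-contained direct construction, you would need to (a) replace ``maximum degree'' by ``simplicial'' and first prove that such a vertex exists, and (b) actually execute the nesting argument; proving $3K_1$-freeness instead is substantially shorter if you are willing to cite the $\{3K_1,C_4,C_5\}$-free characterization.
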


\begin{proof}
Sufficiency of the condition is easy to establish. The graphs $P_5$, $C_5$, the claw, and the bull, are not co-bipartite and therefore not co-chain.
The $4$-cycle admits only one partition of its vertex set into two cliques, which however does not have the desired property.

Now we prove necessity. Let $G$ be a connected $\{P_5, C_4, C_5, {\it claw}, {\it bull}\}$-free graph.
We will show that $G$ is $3K_1$-free. This will imply that $G$ is co-chain
due to the known characterization of co-chain graphs as exactly the graphs that are
$\{3K_1, C_4, C_5\}$-free~\cite{MR2460558}.

Suppose for a contradiction that $G$ has an induced $3K_1$, with vertex set $\{x,y,z\}$, say.
Since $G$ is connected and $P_5$-free, every two vertices among $\{x,y,z\}$ are at distance $2$ or $3$.
%We may assume without loss of generality that $\{x,y,z\}$ is chosen so that the value
%$d(x,y)+d(x,z)+d(y,z)$ is minimized.

Suppose first that $d(x,y) = d(x,z) = 2$. Let $y'$ be a common neighbor of
$x$ and $y$, and let $z'$ be a common neighbor of $x$ and $z$.
Since $G$ is claw-free, $y'z\not\in E(G)$ and similarly $yz' \not\in  E(G)$. In particular,
$y'\neq z'$.
Now, the vertex set $\{y,y',x,z',z\}$ induces either a $P_5$  (if $y'$ and $z'$ are non-adjacent), or a bull (otherwise), a contradiction.

Therefore, at least two out of the pairwise distances between $x$, $y$, and $z$ are equal to $3$.
By symmetry, we may assume that $d(x,y) = d(x,z) = 3$.
Note that the set of vertices at distance $2$ from $x$ form a clique, since otherwise
we could apply the arguments from the previous paragraph to the triple $\{x,y',z'\}$ where $\{y',z'\}$ is a
pair of non-adjacent vertices with $d(x,y') = d(x,z') = 2$.

Fix a pair of paths $P$ and $Q$ such that
$P= (x = p_0,p_1,p_2,p_3 = y)$  is a shortest $x$-$y$ path,
$Q= (x = q_0,q_1,q_2,q_3 = z)$  is a shortest $x$-$z$ path,
and $P$ and $Q$ agree in their initial segments as much as possible, that is,
the value of $k = k(P,Q) = \max\{j: p_i = q_i$ for all $0\le i\le j\}$ is maximized.
Clearly, $k\in \{0,1,2\}$.
If $k = 2$, then $G$ contains a claw induced by $\{p_1,p_2,y,z\}$. Therefore $k\in \{0,1\}$.
If $k = 1$, then, recalling that $p_2$ is adjacent to $q_2$, we infer that $G$ contains either
a claw induced by $\{p_1,p_2,y,z\}$ (if $p_2$ is adjacent to $z$) or a bull induced by $V(Q)\cup \{p_2\}$.
Therefore $k= 0$. By the minimality of $(P,Q)$, we infer that $\{p_1q_2, p_2q_1,p_2z,yq_2\}\cap E(G) = \emptyset$.
But now, $G$ contains a claw induced by $\{p_1,p_2,y,q_2\}$. This contradiction completes the proof.
\end{proof}

\subsection{Rafts and connected true-twin-free co-chain graphs}\label{subsec:rafts}

In Section~\ref{subsec:sufficient}, we will identify an infinite family of \po~strong product graphs.
The family will be based on the following particular family of co-chain graphs.
Given a non-negative integer $n\ge 0$, the \emph{raft of order $n$} is the graph
$R_n$ consisting of two disjoint cliques on $n+1$ vertices each, say $X = \{x_0, x_1, \ldots, x_n\}$ and
$Y = \{y_0, y_1, \ldots, y_n\}$ together with additional edges between $X$ and $Y$ such that for every $0
\leq i, j \leq n$, vertex $x_i$ is adjacent to vertex $y_j$ if and only if $i+j\ge n+1$.
Note that vertices $x_0$ and $y_0$ are simplicial in the raft.
The cliques $X$ and $Y$ will be referred to as the {\it parts} of the raft.
Fig.~\ref{fig:raft} shows rafts of order $n$ for $n\in \{1,2,3\}$.

\begin{figure}[h!]
  \centering
\includegraphics[width=0.8\textwidth]{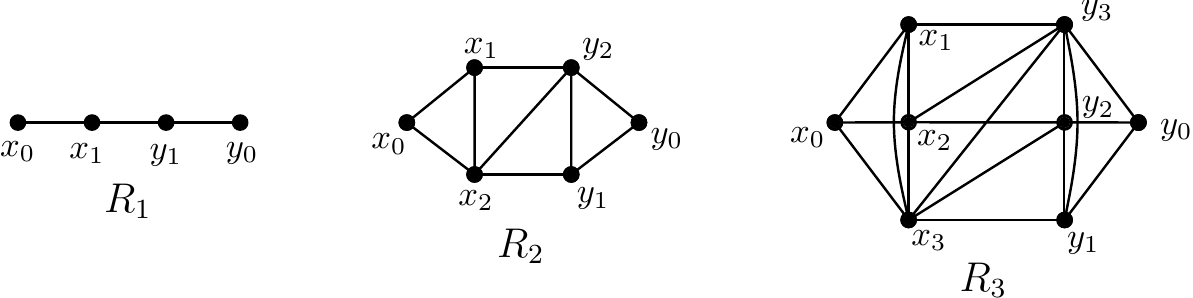}
\caption{Three small rafts}\label{fig:raft}
\end{figure}

It is an easy consequence of definitions that every raft is a co-chain graph.
Moreover, as we show next, rafts play a crucial role in the
classification of connected true-twin-free co-chain graphs.

\begin{proposition}\label{obs:p3}
Let $G$ be a connected true-twin-free graph.
Then, $G$ is co-chain if and only if $G \in \{K_1\}\cup \{R_n, n\geq 1\} \cup \{R_n\ast K_1, n \geq 0\}$.
Moreover, if $G$ is $P_4$-free, then $G$ is co-chain
if and only if $G\cong K_1$ or $G\cong P_3$.
\end{proposition}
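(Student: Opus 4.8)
The plan is to prove both statements by induction on $|V(G)|$, exploiting the co-chain partition $(X,Y)$ together with the true-twin-free hypothesis. First I would record the easy direction: every raft $R_n$, every $R_n \ast K_1$, and $K_1$ are co-chain (for the rafts this is immediate from the adjacency rule $x_i \sim y_j \iff i+j \ge n+1$, which orders the closed neighborhoods within each part), so membership in the listed family implies co-chain. For the forward direction, suppose $G$ is connected, true-twin-free, and co-chain with co-chain partition $(X,Y)$, where $X = \{x_1,\dots,x_{|X|}\}$ is ordered so that $N[x_i] \subseteq N[x_j]$ for $i < j$; symmetrically order $Y = \{y_1,\dots,y_{|Y|}\}$.

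The key structural observations I would establish are: (1) since $G$ is true-twin-free, no two vertices of $X$ have the same closed neighborhood, so the inclusions $N[x_1] \subsetneq N[x_2] \subsetneq \cdots$ are strict, and likewise for $Y$; (2) the largest vertex $x_{|X|}$ satisfies $N[x_{|X|}] \supseteq X$, and if $x_{|X|}$ is also adjacent to all of $Y$ it is universal — I would treat a universal vertex as the "$\ast K_1$" summand, peel it off, and apply induction to $G - x_{|X|}$ (which is still connected, true-twin-free, and co-chain, using Proposition~\ref{prop:co-chain-TTA} in reverse); (3) if there is no universal vertex, I claim $|X| = |Y| =: n+1$ and the adjacency between $X$ and $Y$ is exactly the raft pattern. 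To see this, note that connectivity forces $x_{|X|} \sim y_{|Y|}$ (the two "top" vertices), and more generally the bipartite adjacency between the chains $X$ and $Y$ is a "staircase": define $f(i) = \min\{j : x_i \sim y_j\}$ (with $f(i) = \infty$ if $x_i$ has no neighbor in $Y$); monotonicity of the co-chain orders makes $f$ non-increasing, and the strictness from true-twin-freeness forces $f$ to decrease by exactly one at each step and the chains to have equal length, which is precisely the defining condition of $R_n$ after relabeling $x_i \mapsto x_{n+1-i}$, $y_j \mapsto y_{n+1-j}$.

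For the "moreover" clause, I would add the hypothesis that $G$ is also $P_4$-free and simply check which members of the family $\{K_1\} \cup \{R_n : n \ge 1\} \cup \{R_n \ast K_1 : n \ge 0\}$ are $P_4$-free. The graph $R_n$ for $n \ge 2$ contains an induced $P_4$ on $\{x_0, y_n, x_n, y_0\}$ — indeed $x_0 \sim y_n$, $y_n \sim x_n$, $x_n \sim y_0$, while $x_0 \not\sim x_n$ (false, they're in the same clique) — so I would instead pick $\{x_0, y_n, x_1, y_{n-1}\}$ or a similar quadruple and verify it induces a $P_4$; for $R_n \ast K_1$ with $n \ge 1$ the same $P_4$ survives since the apex only adds a universal vertex. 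Only $R_1 \ast K_1$... wait, $R_1 \ast K_1 = R_1 \ast K_1$ has $5$ vertices; one checks $R_1 \cong P_3$? No: $R_1$ is two edges $x_0x_1$, $y_0y_1$ plus the edge $x_1y_1$, i.e. $P_4$. So $R_1 \cong P_4$ is not $P_4$-free. Hence the only survivors are $K_1$, $R_0 \ast K_1 \cong K_1 \ast K_1 = K_2$... and $P_3 = R_0 \ast K_1$? $R_0$ is two vertices $x_0,y_0$ with $x_0 \sim y_0$ (since $0+0 \ge 1$ is false — so $R_0 = 2K_1$), thus $R_0 \ast K_1 = K_{1,2}\ast$-join $= P_3$. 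I would nail down these small identifications carefully and conclude that the $P_4$-free members are exactly $K_1$ and $P_3$.

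The main obstacle I anticipate is the bookkeeping in step (3): correctly translating "co-chain partition with strictly increasing closed neighborhoods in each clique, connected, no universal vertex" into the exact staircase adjacency of the raft, and in particular proving $|X| = |Y|$ rather than merely a one-sided bound. The clean way is to show that the map sending each $x_i$ to the "threshold" vertex in $Y$ and vice versa are mutually inverse order-reversing bijections once universal vertices are removed; handling the boundary cases (vertices of $X$ with no neighbor in $Y$, which would be simplicial vertices playing the role of $x_0$) and confirming there is at most one such vertex on each side (else we'd get true twins among them) is the delicate part. Everything else reduces to routine verification, and the "moreover" clause is a finite check once the family is pinned down.
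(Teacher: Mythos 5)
Your proposal follows essentially the same route as the paper: both directions hinge on the observation that true-twin-freeness turns the co-chain nesting $N(x_1)\cap Y\subseteq\cdots\subseteq N(x_{|X|})\cap Y$ into a strictly increasing chain whose cardinalities grow by exactly one, which pins down the raft adjacency pattern and forces $|X|=|Y|$ (up to a universal vertex), and the ``moreover'' clause is the same finite check using $R_1\cong P_4$ and $R_0\ast K_1\cong P_3$. The paper replaces your induction by a direct four-way case analysis on whether $N(x_1)\cap Y$ and $N(y_1)\cap X$ are empty, which buys it one thing your write-up gets wrong: after peeling off the universal vertex $x_{|X|}$ you assert that $G-x_{|X|}$ is still connected, but this fails already for $G\cong P_3$ (and, in general, ``at most one universal vertex'' needs to be noted, since two would be true twins). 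Both points are easily patched --- a disconnected true-twin-free co-chain graph can only be $K_1+K_1$, and the paper's case analysis handles this silently by exhibiting $G - x_{|X|}\cong R_{|X|-2}$ directly --- so I would call this a repairable oversight rather than a wrong approach; your self-corrected identification of an induced $P_4$ in $R_n$ (e.g.\ on $\{x_0,x_1,y_n,y_{n-1}\}$) is fine.
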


\begin{proof}
Sufficiency is immediate since
every graph in $\{K_1\}\cup \{R_n, n\geq 1\} \cup \{R_n\ast K_1, n \geq 0\}$ is co-chain.

Now, let $G$ be a connected true-twin-free co-chain graph, with a co-chain partition $(X,Y)$.
Since $G$ is true-twin-free, the closed neighborhoods of vertices in $X = \{x_1,\ldots, x_{|X|}\}$ are properly nested.
Equivalently, $$N(x_1)\cap Y\subset N(x_2)\cap Y\subset \ldots \subset N(x_{|X|})\cap Y\,.$$
Since there are no pairs of true twins in $Y$, we have
$|N(x_{i+1})\cap Y| = |N(x_{i})\cap Y|+1$ for all $i \in \{1,\ldots, |X|-1\}$.
This implies an ordering of vertices in $Y$, say $Y = \{y_1,\ldots, y_{|Y|}\}$ such that
$N(y_{i})\cap X\subset N(y_{i+1})\cap X$ and
$|N(y_{i+1})\cap X| = |N(y_{i})\cap X|+1$
for all $i \in \{1,\ldots, |Y|-1\}$.

If $X = \emptyset$ or $Y = \emptyset$, then since both $X$ and $Y$ are cliques and $G$ is true-twin-free, we infer that $G\cong K_1$.

Now, both $X$ and $Y$ are non-empty, and we analyze four cases depending on the smallest neighborhoods of vertices in the two parts.
If $N(x_1)\cap Y = N(y_1)\cap X = \emptyset$, then since $G$ is connected, we have $|X|= |Y|\ge 2$,
and $G$ is isomorphic to $R_{|X|-1}$.
If $N(x_1)\cap Y = \emptyset$ and $N(y_1)\cap X \neq \emptyset$, then
$|X| \ge  2$, and deleting the universal vertex
$x_{|X|}$ from $G$ leaves a graph isomorphic to $R_{|X|-2}$.
Thus, $G\cong R_{|X|-2}\ast K_1$.
The case when $N(x_1)\cap Y \neq \emptyset$ and $N(y_1)\cap X = \emptyset$ is symmetric to the previous one.
Finally, if $N(x_1)\cap Y \neq \emptyset$ and $N(y_1)\cap X \neq  \emptyset$, then vertices
$x_{|X|}$ and $y_{|Y|}$ are both universal in $G$, contrary to the fact that $G$ is true-twin-free.

Suppose now that $G$ is also $P_4$-free but not isomorphic to either $K_1$ or $P_3$.
Note that since $R_1\cong P_4$, every raft of order at least $1$ contains an induced $P_4$.
It follows that $G$ is isomorphic to a graph of the form $R_n\ast K_1$ for some $n \geq 0$.
Since $R_0\ast K_1\cong P_3$, we have $n\ge 1$. But then $R_1\cong P_4$ is an induced subgraph of $G$, a contradiction.
\end{proof}

\subsection{An infinite family of \po~strong product graphs}\label{subsec:sufficient}

The following observation is an immediate consequence of Lemma~\ref{lem:simplicial}.

\begin{observation}\label{obs:strong}
Let $G$ be a graph with a simplicial vertex $v$, and let $P_3 = (u_1, u_2, u_3)$ be the $3$-vertex path, with
leaves $u_1$ and $u_3$. Then, vertices $(u_1, v)$ and $(u_3, v)$ are simplicial in $P_3 \boxtimes G$.
\end{observation}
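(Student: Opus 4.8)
The plan is to apply Lemma~\ref{lem:simplicial} directly. Recall that the lemma states that if $u$ is simplicial in a graph $A$ and $v$ is simplicial in a graph $B$, then $(u,v)$ is simplicial in $A\boxtimes B$. So the first step is to identify, in the path $P_3 = (u_1,u_2,u_3)$, which vertices are simplicial: the leaves $u_1$ and $u_3$ each have a single neighbor, namely $u_2$, and a one-element set is trivially a clique, so both $u_1$ and $u_3$ are simplicial in $P_3$. (By contrast $u_2$ is not simplicial, since $\{u_1,u_3\}$ is not a clique, but we do not need this.)

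The second step is simply to combine this with the hypothesis that $v$ is simplicial in $G$. Since $u_1$ is simplicial in $P_3$ and $v$ is simplicial in $G$, Lemma~\ref{lem:simplicial} applied to the pair $(P_3, G)$ yields that $(u_1,v)$ is simplicial in $P_3\boxtimes G$. The same argument with $u_3$ in place of $u_1$ gives that $(u_3,v)$ is simplicial in $P_3\boxtimes G$. This is exactly the assertion of the observation.

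There is essentially no obstacle here: the observation is a one-line corollary of Lemma~\ref{lem:simplicial} once one notes that leaves of a path are simplicial. The only thing to be slightly careful about is the (harmless) asymmetry in notation, since Lemma~\ref{lem:simplicial} is stated for a product $G\boxtimes H$ with simplicial vertices $u\in V(G)$, $v\in V(H)$, and here the roles are played by $P_3$ and $G$; commutativity of the strong product (noted in the text) makes this immaterial. Thus the proof is just: leaves of $P_3$ are simplicial, and then invoke Lemma~\ref{lem:simplicial} twice.

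\begin{proof}
Since $u_1$ and $u_3$ are leaves of $P_3$, each of them has exactly one neighbor (namely $u_2$), and a single vertex forms a clique; hence $u_1$ and $u_3$ are simplicial in $P_3$. Applying Lemma~\ref{lem:simplicial} to the graphs $P_3$ and $G$, with the simplicial vertex $u_1$ in $P_3$ and the simplicial vertex $v$ in $G$, we conclude that $(u_1, v)$ is simplicial in $P_3 \boxtimes G$. The same argument with $u_3$ in place of $u_1$ shows that $(u_3, v)$ is simplicial in $P_3 \boxtimes G$.
\end{proof}
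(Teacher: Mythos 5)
Your proof is correct and matches the paper's approach exactly: the paper states that the observation is an immediate consequence of Lemma~\ref{lem:simplicial}, which is precisely your argument (leaves of $P_3$ are simplicial, then apply the lemma twice).
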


\begin{proposition}\label{lemm:orient-rafts}
For every $n\ge 1$, the strong product $P_3 \boxtimes R_{n}$ is \po
\end{proposition}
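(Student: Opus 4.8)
The plan is to construct an explicit $1$-perfect orientation of $P_3 \boxtimes R_n$. Write $V(P_3) = \{u_1, u_2, u_3\}$ with $u_2$ the center, and let $X = \{x_0, \ldots, x_n\}$, $Y = \{y_0, \ldots, y_n\}$ be the parts of the raft $R_n$, with $x_i \sim y_j$ iff $i+j \ge n+1$. The vertex set of $P_3 \boxtimes R_n$ splits into three ``layers'' indexed by $u_1, u_2, u_3$, each layer inducing a copy of $R_n$, and for every vertex $w$ of $R_n$ the set $\{(u_1,w),(u_2,w),(u_3,w)\}$ is a triangle; moreover $(u_1, w) \sim (u_2, w')$ and $(u_2,w)\sim(u_3,w')$ whenever $w \sim_{R_n} w'$, while the only edges between the $u_1$-layer and the $u_3$-layer are of the form $(u_1,w)(u_3,w)$. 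I would first reduce to an easier-to-orient graph: by Observation~\ref{obs:strong}, the vertices $(u_1, x_0), (u_3, x_0), (u_1, y_0), (u_3, y_0)$ are simplicial in $P_3 \boxtimes R_n$ (since $x_0, y_0$ are simplicial in $R_n$), so by Proposition~\ref{prop:operations}(d) it suffices to $1$-perfectly orient the graph obtained by deleting these four vertices; iterating, one hopes to peel away all simplicial vertices and reduce to a smaller strong product. The cleanest line is probably an induction on $n$: show $P_3 \boxtimes R_n$ is obtained from $P_3 \boxtimes R_{n-1}$ (plus possibly a bounded number of extra vertices) by operations from Proposition~\ref{prop:operations} — true-twin additions, universal-vertex additions, simplicial-vertex additions — with the base case $P_3 \boxtimes R_1 = P_3 \boxtimes P_4$ handled directly (it is a small graph, and one can exhibit a $1$-perfect orientation by hand, or note it is an induced subgraph of something already known to be $1$-p.o.).

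Concretely, I would examine how $R_n$ sits inside $R_{n+1}$. The vertices $x_{n+1}$ and $y_{n+1}$ of $R_{n+1}$ are universal, and deleting both leaves exactly $R_{n-1}$ on the remaining $2n$ vertices... which overshoots; more carefully, $R_{n+1} - \{x_{n+1}\}$ has $x_n$ adjacent to all of $Y \setminus \{y_0\}$ and nothing forces it to be a raft directly, so the recursive structure of rafts needs to be pinned down first. An alternative, and I think the better approach, is to orient $P_3 \boxtimes R_n$ directly. Assign to each raft vertex a ``level'': $\ell(x_i) = i$, $\ell(y_j) = -j$ (so levels run from $n$ down to $-n$, with ties only within a part). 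The natural guess for the orientation: within each $u$-layer, orient raft edges toward the higher level (this is the co-chain/near-simplicial order — out-neighborhoods point ``up'' into larger closed neighborhoods); for the triangle $\{(u_1,w),(u_2,w),(u_3,w)\}$, orient $(u_1,w) \to (u_2,w) \to (u_3,w)$ and $(u_1,w)\to(u_3,w)$ (or the reverse — to be tuned); and for cross-layer edges $(u_1,w)(u_2,w')$ with $w \sim w'$, orient them so that each vertex's out-neighborhood stays inside a single clique of the ambient graph. The verification obligation is: for every vertex $(u_k, w)$, the set $N^+((u_k,w))$ is a clique. Out-neighbors of $(u_k, w)$ can live in the same layer (fine, they lie in a common co-chain clique if we orient ``upward'' consistently) or in an adjacent layer; the content of the proof is checking that a vertex does not simultaneously get an out-neighbor $(u_2, w')$ and an out-neighbor $(u_2, w'')$ with $w' \not\sim w''$, and that cross-layer out-neighbors are compatible with same-layer ones. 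Here the raft's threshold structure ($x_i \sim y_j \iff i+j \ge n+1$) is exactly what makes the ``point upward in level'' rule produce cliques: the neighborhoods are nested, so the set of vertices of level $> \ell(w)$ adjacent to $w$ forms a clique.

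The main obstacle I anticipate is the middle layer: vertices $(u_2, w)$ see three layers at once, and their out-neighborhoods are the most constrained. I expect one must orient all triangle edges $(u_1,w)(u_2,w)$ and $(u_2,w)(u_3,w)$ so that $(u_2,w)$ is a \emph{sink} on its triangle (i.e. $(u_1,w)\to(u_2,w)\leftarrow(u_3,w)$), forcing $(u_2,w)$'s out-neighbors to lie entirely in the $u_2$-layer, where the co-chain order saves us; then $(u_1,w)$ and $(u_3,w)$ each have out-neighbors possibly spanning their own layer plus the $u_2$-layer, and one checks that $N_{R_n}[w]$ being built from two nested cliques keeps the union a clique in $R_n$, hence a clique in the relevant layer of the product. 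It may be necessary to treat the simplicial vertices $x_0, y_0$ (and the universal vertices $x_n, y_n$) of the raft as special cases, or to first strip them via Observation~\ref{obs:strong} and Proposition~\ref{prop:operations}, reducing to the true-twin-free/universal-free ``core'' described in Proposition~\ref{obs:p3}; combining that reduction with an explicit orientation of the core is likely the shortest complete argument. I would present the explicit orientation and then verify the clique condition by a short case analysis on which layer each out-neighbor lies in, invoking the threshold description of $R_n$ at the one place where adjacency inside the product must be decided.
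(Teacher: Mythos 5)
Your opening reduction---peeling off the four simplicial vertices $(u_1,x_0)$, $(u_3,x_0)$, $(u_1,y_0)$, $(u_3,y_0)$ via Observation~\ref{obs:strong} and the closure of \po~graphs under simplicial vertex addition---is exactly how the paper's proof begins. From that point on, however, what you have is a plan rather than a proof, and the plan rests on two structural misreadings of $P_3\boxtimes R_n$. First, $\{(u_1,w),(u_2,w),(u_3,w)\}$ is not a triangle: $u_1$ and $u_3$ are the two non-adjacent leaves of $P_3$, so there are \emph{no} edges at all between the $u_1$-layer and the $u_3$-layer (in particular no edges $(u_1,w)(u_3,w)$), and the ``orient the triangle'' discussion has no object. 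Second, and more seriously, your proposed resolution of the ``main obstacle''---arrange for $(u_2,w)$ to be a sink so that its out-neighbours lie entirely in the $u_2$-layer---cannot work. To confine $N^+((u_2,w))$ to the middle layer you must orient every cross-layer edge into the middle layer; but then the outer-layer vertex $(u_1,x_1)$ acquires the two out-neighbours $(u_2,x_0)$ and $(u_2,y_n)$, which are non-adjacent because $x_0\not\sim y_n$ in $R_n$ (as $0+n<n+1$). Note that $(u_2,x_0)$ survives the simplicial-vertex deletion: only the copies of $x_0$ and $y_0$ in the two outer layers are simplicial in the product. The orientation that actually works, given explicitly in the paper, does the opposite in places: the middle-layer vertex $a=(u_2,x_0)$ has out-neighbours in an outer layer, and the middle-layer cliques send many edges outward as well.

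The remaining ingredients of your sketch are either abandoned by you (the induction on $n$, which indeed has no clean recursion---note also that $x_n$ and $y_n$ are not universal in $R_n$, since $x_n\not\sim y_0$) or explicitly left ``to be tuned''. The entire mathematical content of the paper's proof is precisely the part you have not supplied: a partition of the reduced graph into eight cliques (two singletons and six cliques of size $n$, indexed by layer and by part of the raft), an explicit eight-rule orientation exploiting the threshold condition $i+j\ge n+1$, and a five-case verification that every out-neighbourhood decomposes as $A\cup B$ with $A$ and $B$ cliques and with $j+k>n$ for all $j\in A$, $k\in B$. Your instinct that the threshold structure of the raft is what ultimately forces the out-neighbourhoods to be cliques is correct, but without the actual orientation the proposition is not proved, and the orientation you gesture towards is provably not the right one.
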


\begin{proof}
First, notice that since $R_n$ has two simplicial vertices, Observation~\ref{obs:strong} implies that the product
$P_3 \boxtimes R_n$ has $4$ simplicial vertices.
Let $G$ be the product $P_3 \boxtimes R_n$ minus these $4$ simplicial vertices.
Since \po~graphs are closed under simplicial vertex additions,
it is enough to verify that $G$ is $1$-p.o.
To prove this we will give an explicit orientation of $G$ and show that it is a $1$-perfect orientation.

Let $V(P_3) = \{u_1, u_2, u_3\}$ where $u_1$ and $u_3$ are the two leaves.
Moreover, assuming the notation as in the definition of rafts, let $V(R_n)=X\cup Y$, where $X =  \{x_0, x_1, \ldots, x_n\}$ and $Y = \{y_0, y_1, \ldots, y_n\}$ are the
two parts of the raft. Vertices in $G$ will be said to be {\it left}, resp.~{\it right}, depending on whether their second coordinate is in $X$ or in $Y$, respectively.
A schematic representation of $G$ is shown in Fig.~\ref{fig:p3raft}. We partition the graph's vertex set into $8$ cliques:
two singletons, $\{a\}$ and $\{b\}$,
where $a = (u_2,x_0)$ and $b = (u_2,y_0)$,
and $6$ cliques of size $n$ each, namely
$A_{1}$, $A_{2}$, $A_{3}$, $B_{1}$, $B_{2}$, and $B_{3}$, defined as follows:
for $i\in \{1,2,3\}$, we have $B_i = \{u_i\}\times (X\setminus \{x_0\})$ and
$A_i = \{u_{4-i}\}\times (Y\setminus \{y_0\})$. Bold edges between certain pairs of sets
mean that every possible edge between the two sets is present.
If the corresponding edge is not bold, then only some of the edges between the two sets are present.

\begin{figure}[h!]
  \centering
\includegraphics[width=0.9\textwidth]{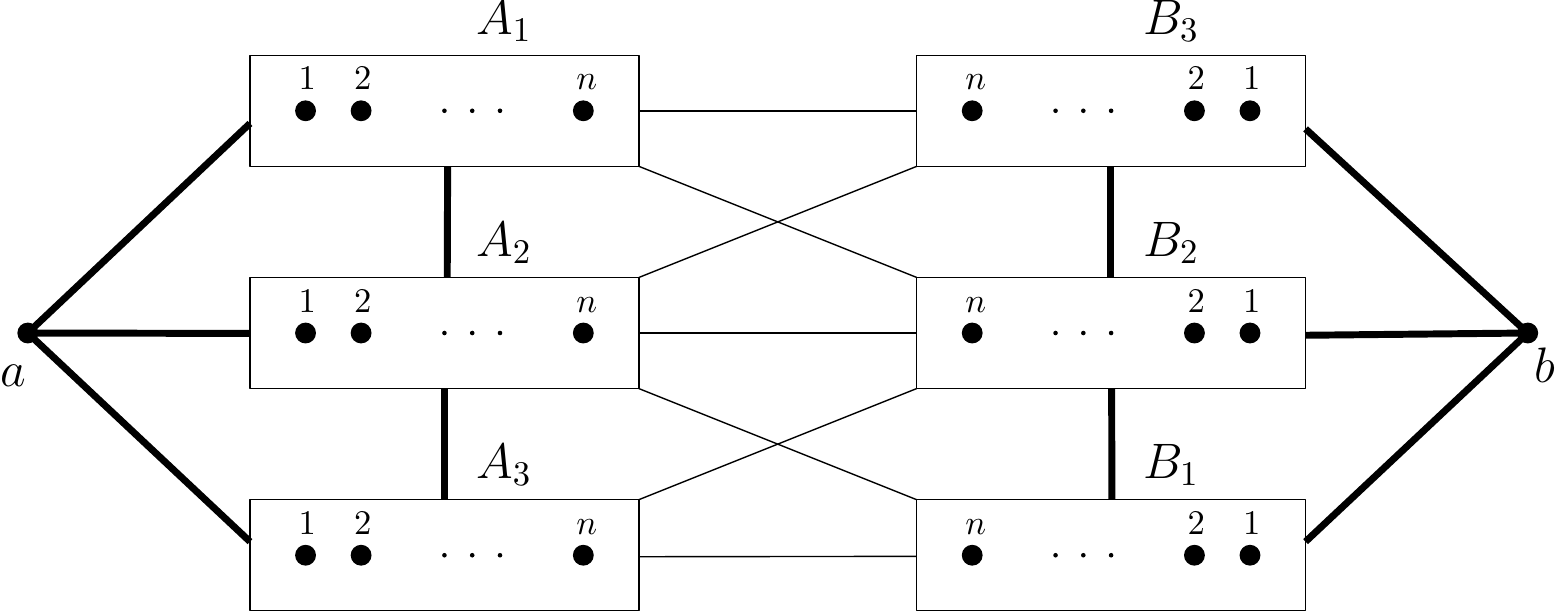}
\caption{A schematic representation of graph $G$}\label{fig:p3raft}
\end{figure}

To describe such edges, we introduce the following ordering of the vertices within each of the $6$ cliques of size $n$.
Note that for every $1\le i<j\le n$, we have that $N_{R_n}[x_i]\subset N_{R_n}[x_j]$ and $N_{R_n}[y_i]\subset N_{R_n}[y_j]$. We order the vertices in the $6$ cliques accordingly, that is,
for each clique of the form $A_i$, the linear ordering of its vertices
is $(u_i, x_1),\ldots, (u_i, x_n)$; for each clique of the form $B_i$, the linear ordering of its vertices
is $(u_{4-i}, y_1),\ldots, (u_{4-i}, y_n)$.
To keep the notation light, we will slightly abuse the notation, speaking of ``vertex $i$ in clique $C$''
(for $i\in \{1,\ldots, n\}$ and $C\in \{A_{1},A_2,A_3,B_1,B_2,B_3\}$) when referring to the $i$-th vertex
in the linear ordering of $C$. We will also speak of ``left'' and of ``right'' cliques.

The edges of graph $G$ can be now concisely described as follows.
We will say that two cliques $A_i$ and $A_j$ (or $B_i$ and $B_j$) are {\it adjacent} if $|i-j|\le 1$.
The neighborhood of $a$ is $A_1\cup A_2\cup A_3$.
The neighborhood of $b$ is $B_1\cup B_2\cup B_3$.
For each vertex $i$ in a left clique, say $A_j$, its closed neighborhood consists of vertex $a$,
all the vertices belonging to some left clique adjacent to $A_j$,
and of vertices $\{n-i+1, \ldots, n\}$ in each right clique adjacent to $B_{4-j}$.
For each vertex $i$ in a right clique, say $B_j$, its closed neighborhood consists of vertex $b$,
all vertices belonging to some right clique adjacent to $B_j$,
and of vertices $\{n-i+1, \ldots, n\}$ in each left clique adjacent to $A_{4-j}$.
Fig.~\ref{fig:p3raft2} shows a concrete example of $G$, namely for the case $n = 3$.

\begin{figure}[h!]
  \centering
\includegraphics[width=0.7\textwidth]{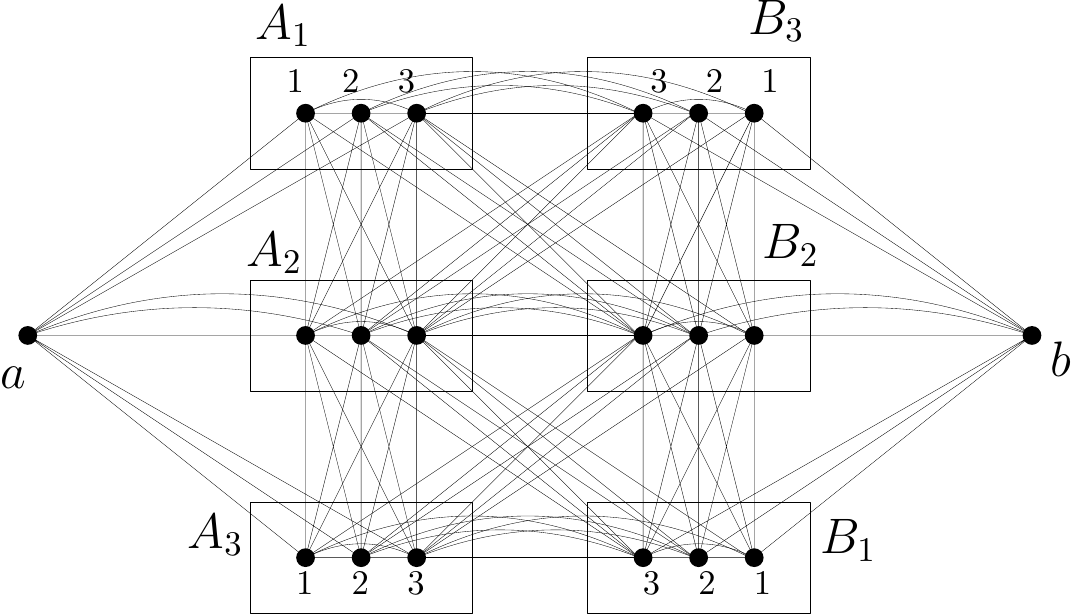}
\caption{Graph $G$ in the case $n = 3$}\label{fig:p3raft2}
\end{figure}

%\medskip
We now define an orientation of $G$, say $D$, as follows:
\begin{enumerate}[--]
\item Edges between vertex $a$ and a vertex $i\in A_j$ are oriented from $i$ to $a$ for $j= 1$ and
from $a$ to $i$ for $j\in \{2,3\}$.
Symmetrically, edges between vertex $b$ and a vertex $i \in B_j$ are oriented from $i$ to $b$ for $j= 1$ and
from $b$ to $i$ for $j\in \{2,3\}$.

\item Edges within each clique are oriented from vertex $i$ to vertex $j$
(with $j\neq i$)  if and only if $i < j$.

\item All edges between vertices in $A_1$ and $A_2$ are oriented from $A_1$ to $A_2$.
Symmetrically, all edges between vertices in $B_1$ and $B_2$ are oriented from $B_1$ to $B_2$.

\item Edges between vertices in $A_2$ and $A_3$ are oriented as follows:
For $i \in A_2$ and $j \in A_3$, from $i$ to $j$ if $i<j$, and from $j$ to $i$, otherwise.
Symmetrically, edges between $B_2$ and $B_3$ are oriented as follows:
For $i \in B_2$ and $j \in B_3$, from $i$ to $j$ if $i<j$, and from $j$ to $i$, otherwise.

\item All edges between vertices in $A_1$ and $B_3$ are oriented from $B_3$ to $A_1$.
Symmetrically, all edges between vertices in $A_3$ and $B_1$ are oriented from $A_3$ to $B_1$.

\item All edges between vertices in $A_1$ and $B_2$ are oriented from $B_2$ to $A_1$.
Symmetrically, all edges between vertices in $A_2$ and $B_1$ are oriented from $A_2$ to $B_1$.

\item All edges between vertices in $A_2$ and $B_1$ are oriented from $B_1$ to $A_2$.
Symmetrically, all edges between vertices in $A_3$ and $B_2$ are oriented from $A_3$ to $B_2$.

\item Finally, all edges between vertices in $A_2$ and $B_2$ are oriented from $A_2$ to $B_2$.
\end{enumerate}

To conclude the proof  it remains to check that $D$ is a $1$-perfect orientation of $G$, that is,
that for each vertex $v$ in $G$, its out-neighborhood in $D$ forms a clique in $G$.
We consider several cases according to which part of the above vertex partition vertex $v$ belongs to:
\begin{enumerate}[(i)]
\item $v \in \{a,b\}$. We have $N^{+}_D(a)=A_2 \cup A_3$, which forms a clique in $G$.
Symmetrically, $N^{+}_D(b)$ forms a clique in $G$.

\item $v \in A_1\cup B_1$.
By symmetry, we may assume that $v\in A_1$, say $v = i$. Then, $N^{+}_D(i)= \{a\} \cup \{j \in A_1, j > i\}\cup A_2$, which forms a clique in $G$.

\item $v \in A_2$, say $v = i$. We have $N^{+}_D(i)= A\cup B$, where $A =\{j \in A_2\cup A_3, j > i\}$ and $B = \{j \in B_2\cup B_3, j > n-i\}$.
Note that $A$ and $B$ are cliques in $G$.
Moreover, if $j\in A$ and $k\in B$, then $j+k>i+(n-i) = n$, which implies that $j$ and $k$ are adjacent in $G$.
Therefore, $N^{+}_D(i)$ is a clique in $G$.

\item $v \in A_3\cup B_3$.
By symmetry, we may assume that $v\in A_3$, say $v = i$. We have $N^{+}_D(i)= A\cup B$, where $A =\{j \in A_2, j\ge i\}\cup \{j\in A_3, j > i\}$
and $B = \{j \in B_2\cup B_3, j > n-i\}$.
Again, $A$ and $B$ are cliques in $G$.
Moreover, if $j\in A$ and $k\in B$, then $j+k\ge i+(n-i+1) = n+1$, which implies that $j$ and $k$ are adjacent in $G$.
Therefore, $N^{+}_D(i)$ is a clique in $G$.

\item $v \in B_2$, say $v = i$.
Then, $N^{+}_D(i)= A \cup B$, where $A = \{j \in A_1, j > n-i\}$ and
$B = \{j \in B_2\cup B_3, j > i\}$.
Again, $A$ and $B$ are cliques in $G$ such that if $j\in A$ and $k\in B$,
then $j+k>(n-i)+i = n$, which implies that $j$ and $k$ are adjacent in $G$.
Therefore, $N^{+}_D(i)$ is a clique in $G$.
\end{enumerate}

This completes the proof that $G$ is \po
\end{proof}

Note that for every $n\ge 0$, the graph $R_n\ast K_1$ is isomorphic to an induced subgraph of $R_{n+2}$.
Therefore, Proposition~\ref{lemm:orient-rafts} and the fact that \po~graphs are closed under taking induced subgraphs implies the following.

\begin{corollary}\label{cor:orient-rafts}
For every $n\ge 0$, the strong product $P_3 \boxtimes (R_{n}\ast K_1)$ is \po
\end{corollary}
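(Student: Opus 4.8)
The plan is to obtain Corollary~\ref{cor:orient-rafts} directly from Proposition~\ref{lemm:orient-rafts}, using the fact that the class of \po~graphs is closed under taking induced subgraphs (a special case of Proposition~\ref{prop:minors}). The only point requiring verification is the claim made just before the corollary, namely that $R_n\ast K_1$ is isomorphic to an induced subgraph of $R_{n+2}$; once this is in hand, applying the operation $P_3\boxtimes(-)$ to the embedding and invoking Proposition~\ref{lemm:orient-rafts} completes the argument.

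For the embedding I would argue with the explicit vertex labelling from the definition of rafts. Write $R_{n+2}$ with parts $X'=\{x'_0,\dots,x'_{n+2}\}$ and $Y'=\{y'_0,\dots,y'_{n+2}\}$, where $x'_i$ is adjacent to $y'_j$ precisely when $i+j\ge n+3$. Consider the vertex set $S=\{x'_1,\dots,x'_{n+1}\}\cup\{y'_1,\dots,y'_{n+1}\}\cup\{x'_{n+2}\}$, which has $2n+3=|V(R_n\ast K_1)|$ vertices. On the one hand $x'_{n+2}$ is adjacent to every other vertex of $S$: it lies in the clique $X'$, and $x'_{n+2}y'_j$ is an edge for every $j\ge1$. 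On the other hand, the induced subgraph on $S\setminus\{x'_{n+2}\}$ is isomorphic to $R_n$ via $x'_{i+1}\mapsto x_i$ and $y'_{j+1}\mapsto y_j$ for $0\le i,j\le n$: both parts are cliques (being subsets of $X'$ and $Y'$), and $x'_{i+1}y'_{j+1}$ is an edge of $R_{n+2}$ if and only if $(i+1)+(j+1)\ge n+3$, i.e.\ if and only if $i+j\ge n+1$, which is exactly the adjacency rule of $R_n$. Hence the induced subgraph of $R_{n+2}$ on $S$ is isomorphic to $R_n\ast K_1$.

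It remains to push this through the strong product. Since adjacency of $(u,s)$ and $(u',s')$ in $P_3\boxtimes R_{n+2}$ depends only on whether $u'\in N_{P_3}[u]$ and $s'\in N_{R_{n+2}}[s]$, and for $s,s'\in S$ the latter is equivalent to $s'\in N_{R_{n+2}[S]}[s]$, the subgraph of $P_3\boxtimes R_{n+2}$ induced on $V(P_3)\times S$ is exactly $P_3\boxtimes\big(R_{n+2}[S]\big)\cong P_3\boxtimes(R_n\ast K_1)$. Because $n+2\ge1$, Proposition~\ref{lemm:orient-rafts} tells us $P_3\boxtimes R_{n+2}$ is \po, and then Proposition~\ref{prop:minors} (induced subgraphs of \po~graphs are \po) gives that $P_3\boxtimes(R_n\ast K_1)$ is \po

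I do not expect a genuine obstacle here: all of the content is the bookkeeping of the embedding $R_n\ast K_1\hookrightarrow R_{n+2}$, and everything else is an invocation of results already established. (If one wished to avoid even that small computation, one could instead adapt the explicit orientation from the proof of Proposition~\ref{lemm:orient-rafts} after adding a universal vertex to the raft factor, but routing through $R_{n+2}$ as above is cleaner.)
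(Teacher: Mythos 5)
Your proposal is correct and follows exactly the paper's own route: the paper likewise observes that $R_n\ast K_1$ embeds as an induced subgraph of $R_{n+2}$ and then invokes Proposition~\ref{lemm:orient-rafts} together with closure of \po~graphs under induced subgraphs. The only difference is that you spell out the explicit embedding and the compatibility of the strong product with induced subgraphs, both of which check out.
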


\subsection{A characterization of nontrivial strong product graphs that are $1$-p.o.}\label{subsec:characterization}

We now derive the main result of this section. We first show that Proposition~\ref{lemm:orient-rafts} and Corollary~\ref{cor:orient-rafts}
describe all nontrivial strong products of two true-twin-free connected graphs that are $1$-p.o.

\begin{lemma}\label{thm:strong}
A nontrivial strong product, $G \boxtimes H$, of two true-twin-free connected graphs $G$ and $H$ is \po~if and only if one of them is isomorphic to
$P_3$ and the other one belongs to $\{R_{n}, n \geq 1\}\cup \{R_{n} \ast K_1, n\geq 0\}$.
\end{lemma}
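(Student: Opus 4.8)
The plan is to prove both directions, relying heavily on the structural results already established. For the \textbf{sufficiency} direction, suppose one of the factors, say $G$, is isomorphic to $P_3$ and $H\in\{R_n,n\ge 1\}\cup\{R_n\ast K_1,n\ge 0\}$. Then $G\boxtimes H$ is \po~by Proposition~\ref{lemm:orient-rafts} (when $H\cong R_n$) or Corollary~\ref{cor:orient-rafts} (when $H\cong R_n\ast K_1$), so this direction is immediate.

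For the \textbf{necessity} direction, assume $G\boxtimes H$ is \po~with both $G$ and $H$ true-twin-free, connected, and of order at least $2$. First I would use Lemma~\ref{lemm:forbiden}: since each factor has order $\ge 2$ and is connected, if a factor has order $\ge 3$ it contains an induced $P_3$. I would split into cases according to which factors contain an induced $P_3$. If neither $G$ nor $H$ contains an induced $P_3$, then both are $P_3$-free and connected, hence complete; since they are also true-twin-free, each is a single vertex, contradicting nontriviality. So at least one factor, say $H$, contains an induced $P_3$; by part (1) of Lemma~\ref{lemm:forbiden}, $G$ is $\{P_5,C_4,C_5,\mathit{claw},\mathit{bull}\}$-free, hence by Theorem~\ref{prop:co-chain} $G$ is co-chain (it is connected), and since it is true-twin-free, Proposition~\ref{obs:p3} gives $G\in\{K_1\}\cup\{R_n,n\ge 1\}\cup\{R_n\ast K_1,n\ge 0\}$; since $G$ has order $\ge 2$ we may discard $K_1$.

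Now I would argue that $G$ must in fact be $P_4$-free, which by the ``Moreover'' clause of Proposition~\ref{obs:p3} forces $G\cong P_3$ (the only option of order $\ge 2$). The key point is symmetric: $G$ also has order $\ge 2$ and is connected, so if $G$ contains an induced $P_3$ as well, then by part (2) of Lemma~\ref{lemm:forbiden} at least one of $G$, $H$ is $P_4$-free. If $G$ is $P_4$-free we are done by the above. If instead $H$ is $P_4$-free, then since $H$ is connected, true-twin-free, and co-chain (it is $\{P_5,C_4,C_5,\mathit{claw},\mathit{bull}\}$-free as $G$ contains an induced $P_3$), Proposition~\ref{obs:p3} gives $H\cong P_3$ (order $\ge 2$), and then by symmetry of the whole argument with the roles of $G$ and $H$ swapped, $G$ too must be co-chain true-twin-free, so $G\in\{R_n\}\cup\{R_n\ast K_1\}$ — and it remains only to check that $P_3\boxtimes R_n$ for $n\ge 2$ and $P_3\boxtimes(R_n\ast K_1)$ for $n\ge 1$ cannot be reduced further, which is automatic since these are exactly the claimed graphs; the only residual case is whether $G$ could fail to contain an induced $P_3$, i.e. $G\cong P_3$ is forced once $G$ has order $\ge 3$, while if $G$ has order $2$ then $G\cong K_2$, but $K_2$ is not in our list $\{R_n,n\ge 1\}\cup\{R_n\ast K_1,n\ge 0\}$ restricted to order $\ge 2$ — wait, $R_0\ast K_1\cong P_3$ has $3$ vertices and $R_n$ for $n\ge 1$ has $\ge 4$ vertices, so indeed no factor can be $K_2$, meaning every factor of order $\ge 2$ in our list has order $\ge 3$ and hence contains an induced $P_3$; this closes the loop.

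The main obstacle is the bookkeeping in the case analysis, specifically ensuring that exactly one of the two factors ends up being $P_3$ and the other ranges over $\{R_n,n\ge 1\}\cup\{R_n\ast K_1,n\ge 0\}$, rather than both being forced into the raft family (which would be inconsistent unless one of them is $P_3\cong R_0\ast K_1$). The resolution is that part (2) of Lemma~\ref{lemm:forbiden} prevents both factors from containing an induced $P_4$, and the only connected true-twin-free co-chain graph of order $\ge 2$ that is $P_4$-free is $P_3$; since every $R_n$ with $n\ge 1$ contains $R_1\cong P_4$ and every $R_n\ast K_1$ with $n\ge 1$ contains $P_4$ as well, at least one factor must collapse to $P_3$, and then the other is pinned down by the co-chain structure theorem with no further constraint beyond what Proposition~\ref{lemm:orient-rafts} and Corollary~\ref{cor:orient-rafts} already handle on the sufficiency side.
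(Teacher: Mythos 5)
Your proof is correct and follows essentially the same route as the paper's: part (1) of Lemma~\ref{lemm:forbiden} combined with Theorem~\ref{prop:co-chain} and Proposition~\ref{obs:p3} places both factors in the raft family, and part (2) of Lemma~\ref{lemm:forbiden} together with the ``Moreover'' clause of Proposition~\ref{obs:p3} forces one of them to be $P_3$. The paper reaches the starting point that \emph{both} factors contain an induced $P_3$ more directly (a $P_3$-free connected graph is complete, and a complete graph on at least two vertices has a pair of true twins, contradicting true-twin-freeness), which shortcuts the back-and-forth in your case analysis, but your bookkeeping is sound.
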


\begin{proof}
If $G \cong P_3$ and $H \in \{R_n, n \geq 1\} \cup \{R_n \ast K_1,  n \ge 0\}$, the strong product $G \boxtimes
H$ is \po~by Proposition~\ref{lemm:orient-rafts} and Corollary~\ref{cor:orient-rafts}.

Conversely, suppose that $G \boxtimes H$ is \po~If one of the factors is $P_3$-free, its connectedness would imply that the graph is complete and
therefore contains a pair of true twins, which is a contradiction. Thus, both factors contain an induced $P_3$, and
by the first part of Lemma~\ref{lemm:forbiden}, they are both $\{P_5, C_4, C_5, {\it claw}, {\it bull}\}$-free.
In particular, by Theorem~\ref{prop:co-chain}, they are both co-chain.
Moreover, by Proposition~\ref{obs:p3}, they both belong to the set $\{R_n, n\geq 1\} \cup \{R_n\ast K_1, n \geq 0\}$.
By the second part of Lemma~\ref{lemm:forbiden}, at least one of $G$ and $H$ is $P_4$-free, and thus, by
Proposition~\ref{obs:p3}, isomorphic to $P_3$.
\end{proof}

%To describe our main result, the following notions will be convenient. We say that a connected graph is {\it $2$-complete} if it is either a complete graph or the union of two complete graphs. (Equivalently, if it can be obtained from either $K_1$ or $P_3$ by applying a sequence of true twin additions.)

To describe the main result of this section, the following notions will be convenient.
We say that a graph is {\it $2$-complete} if it is the union of two (not necessarily distinct) complete graphs sharing
at least one vertex. Equivalently, a graph is $2$-complete if and only if it can be obtained from either $K_1$ or $P_3$ by applying a sequence of true twin additions.
Moreover, a {\it true-twin-reduction} of a graph $G$ is any maximal induced subgraph of $G$ that is true-twin-free. It is easy to observe that any two true-twin-reductions of a graph $G$ are isomorphic to each other, thus we can speak of
{\it the} true-twin-reduction of $G$.

\begin{theorem}\label{thm:strong-general}
A nontrivial strong product, $G\boxtimes H$, of two graphs $G$ and $H$ is \po~if and only if one of the following conditions holds:
\begin{enumerate}[(i)]
\item Every component of $G$ is complete and $H$ is \po, or vice versa.
\item Every component of $G$ is $2$-complete and
every component of $H$ is co-chain, or vice versa.
\end{enumerate}
\end{theorem}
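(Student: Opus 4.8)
The plan is to reduce the general statement to the already-established case of connected true-twin-free factors (Lemma~\ref{thm:strong}) via a two-stage reduction: first pass to components using Corollary~\ref{cor:connected}, then remove true twins using Lemma~\ref{lemm:true-twins}. Throughout, we use that $G\boxtimes H$ is \po~iff every component of it is \po, and that a component of $G\boxtimes H$ is contained in $C\boxtimes D$ for suitable components $C$ of $G$ and $D$ of $H$ (indeed, when $C$ and $D$ are connected, $C\boxtimes D$ is connected, so the components of $G\boxtimes H$ are exactly the graphs $C\boxtimes D$). Hence $G\boxtimes H$ is \po~if and only if $C\boxtimes D$ is \po~for every pair of components $C$ of $G$ and $D$ of $H$. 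This lets us assume $G$ and $H$ connected.

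For the forward direction, assume $G\boxtimes H$ is \po~with $G,H$ connected. Let $G^\ast$ and $H^\ast$ be the true-twin-reductions of $G$ and $H$; since true twin additions correspond to substituting complete graphs, $G^\ast$ and $H^\ast$ are again connected, and by iterating Lemma~\ref{lemm:true-twins} (applied on each side in turn), $G^\ast\boxtimes H^\ast$ is \po. Now apply Lemma~\ref{thm:strong}: one of $G^\ast,H^\ast$, say $G^\ast$, is isomorphic to $P_3$, and $H^\ast\in\{R_n:n\ge 1\}\cup\{R_n\ast K_1:n\ge 0\}$; the degenerate possibility $G^\ast\cong K_1$ must also be handled — if $G^\ast\cong K_1$ then $G$ is complete and we are in case (i) with $H$ arbitrary \po. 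If $G^\ast\cong P_3$, then $G$ is obtained from $P_3$ by true twin additions, i.e.\ $G$ is $2$-complete; and $H$ is obtained from a graph in $\{R_n\}\cup\{R_n\ast K_1\}$ by true twin additions, hence $H$ is co-chain by Proposition~\ref{prop:co-chain-TTA}. This is exactly case (ii). Passing back from components: each component of $G$ is $2$-complete (or complete, i.e.\ $1$-complete, which is a special case) and each component of $H$ is co-chain — but one must be a little careful that it is the \emph{same} factor playing the role of ``$P_3$'' across all component pairs; this is where the bookkeeping needs attention, and I would argue that if some component of $G$ has an induced $P_3$ then its true-twin-reduction is not $K_1$, forcing all components of $H$ to be co-chain, and symmetrically.

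For the converse, suppose (i) or (ii) holds. In case (i), $G\boxtimes H$ is a disjoint union of graphs $K_{n_C}\boxtimes D$ over components $C$ (with $|C|=n_C$) of $G$ and components $D$ of $H$; since $K_{n_C}\boxtimes D$ is obtained from $K_1\boxtimes D\cong D$ by true twin additions, it is \po~by Proposition~\ref{prop:operations}(b), and then \po~is closed under disjoint union. In case (ii), $G\boxtimes H$ is a disjoint union of graphs $C\boxtimes D$ where $C$ is $2$-complete and $D$ is co-chain. Each such $C$ is obtained from $K_1$ or $P_3$ by true twin additions, and each connected co-chain $D$ is, by Proposition~\ref{obs:p3}, obtained from $K_1$, some $R_n$ ($n\ge 1$), or some $R_n\ast K_1$ ($n\ge 0$) by true twin additions; a universal vertex addition is a special true twin addition only up to handling $R_n\ast K_1$, but Proposition~\ref{obs:p3} already lists $R_n\ast K_1$ explicitly, so no extra care is needed. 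Applying Lemma~\ref{lemm:true-twins} repeatedly on both coordinates, $C\boxtimes D$ is \po~iff $C_0\boxtimes D_0$ is \po~where $C_0\in\{K_1,P_3\}$ and $D_0\in\{K_1\}\cup\{R_n:n\ge1\}\cup\{R_n\ast K_1:n\ge0\}$. If $C_0\cong K_1$ or $D_0\cong K_1$ the product is a single \po~factor; otherwise $C_0\cong P_3$ and $P_3\boxtimes D_0$ is \po~by Proposition~\ref{lemm:orient-rafts} and Corollary~\ref{cor:orient-rafts}. Closing under disjoint union finishes the proof.

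The main obstacle I anticipate is the component-level bookkeeping in the forward direction: ensuring that one cannot have, say, one component-pair forcing $G$'s component into the ``$P_3$'' role while another forces $H$'s component into it, which would make the conclusion (i)/(ii) fail. The resolution is to observe that the roles are dictated by the presence of an induced $P_3$: a connected factor-component without an induced $P_3$ is complete, hence $2$-complete, and imposes no constraint; so we may assume every component of $G$ and of $H$ that matters contains an induced $P_3$, and then Lemma~\ref{lemm:forbiden} together with Lemma~\ref{thm:strong} pins down both sides consistently. A minor secondary point is verifying that true-twin-reduction commutes with taking components and with the strong product in the sense needed, which follows since true twins in $C\boxtimes D$ either come from true twins in $C$ or in $D$ (as in the proof of Lemma~\ref{lemm:true-twins}).
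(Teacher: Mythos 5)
Your overall strategy coincides with the paper's: reduce to components, pass to true-twin-reductions, invoke Lemma~\ref{thm:strong} for the connected true-twin-free case, and finish the converse with Lemma~\ref{lemm:true-twins}, Proposition~\ref{obs:p3}, Proposition~\ref{lemm:orient-rafts} and Corollary~\ref{cor:orient-rafts}. The converse direction and the connected, true-twin-free core of the forward direction are correct as written.

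The one step you yourself flag as delicate --- making the role assignment consistent across all component pairs --- is also the one step where your stated resolution does not work. You write that ``the roles are dictated by the presence of an induced $P_3$,'' but an induced $P_3$ only separates complete components from non-complete ones; it cannot decide which factor plays the $2$-complete role, since both $P_3$ and every raft $R_n$ with $n\ge 1$ contain an induced $P_3$. Following your criterion to the letter, if each of $G$ and $H$ has a component whose true-twin-reduction is, say, $R_2$, you would only conclude that all components on both sides are co-chain, which is not condition~(ii) and does not produce the required contradiction. The feature that actually dictates the roles is the induced $P_4$: by part~2 of Lemma~\ref{lemm:forbiden}, at least one of $G$ and $H$ is $P_4$-free, and since $P_4\cong R_1$ is an induced subgraph of every $R_n$ and of every $R_n\ast K_1$ with $n\ge 1$, the $P_4$-free factor can only have components whose true-twin-reductions are $K_1$ or $R_0\ast K_1\cong P_3$; hence all of its components are $2$-complete, and the other factor then has all components co-chain by Propositions~\ref{obs:p3} and~\ref{prop:co-chain-TTA}. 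This is exactly how the paper closes the argument. You do cite Lemma~\ref{lemm:forbiden} in passing, so the missing ingredient is within reach, but the consistency step must be carried out with $P_4$, not $P_3$.
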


\begin{proof}
Suppose first that given two nontrivial graphs $G$ and $H$, the product $G\boxtimes H$ is $1$-p.o.
Then, $G$ and $H$ are both $1$-p.o.
We may assume that not every component of $G$ is complete and not
every component of $H$ is complete (since otherwise the first condition holds).
Let $G'$ and $H'$ be the true-twin-free reductions of $G$ and $H$, respectively.
Clearly, $G'$ and $H'$ are true-twin-free and $G'\boxtimes H'$ is \po~(since it is an induced subgraph of $G\boxtimes H$).
Let  $G'_1, \ldots, G'_k$ be the components of $G'$ and let
$H'_1, \ldots, H'_\ell$, be the components of $H'$. Then, the components of $G'\boxtimes H'$ are
of exactly $G'_i \boxtimes H'_j$ for $1\leq i\leq k$, $1\leq j\leq \ell$.
Since not every component of $G$ is complete, $G'$ has a nontrivial component, say $G_1'$,
and, similarly, $H'$ has a nontrivial component, say $H_1'$.
Let $i\in \{1,\ldots, k\}$ be such that $G_i'$ is a nontrivial component of $G'$.
Applying Lemma~\ref{thm:strong} to the product $G_i'\boxtimes H_1'$ (which is $1$-p.o.),
we infer that $G_i'$ belongs to the set $\{R_n, n\geq 1\} \cup \{R_n \ast K_1, n\geq 0\}$.
Similarly, every nontrivial component of $H'$ belongs to the set $\{R_n, n\geq 1\} \cup \{R_n \ast K_1, n\geq 0\}$.
In particular, every component of $G'$ or of $H'$ is co-chain.
By Proposition~\ref{lemm:forbiden}, at least one of $G$ and $H$ is $P_4$-free, which, since
$P_4\cong R_1$,  implies that not both $G'$ and $H'$ contain an induced $R_n$ (for some $n\geq 1$).
Therefore, we may assume that each component of $G'$ is isomorphic to either $K_1$ or to $R_0\ast K_1\cong P_3$.
Hence, every component of $G$ is $2$-complete. Since every component of $H$ is obtained from a component of $H'$ by a sequence of true twin additions and
every component of $H'$ is co-chain, Propositions~\ref{prop:co-chain-TTA} and~\ref{obs:p3} imply that
every component of $H$ is co-chain, as claimed.

Let us now prove that each of the two conditions is also sufficient. Suppose first that every component of $G$ is complete and $H$ is $1$-p.o.
Then, every component of $G\boxtimes H$ is isomorphic to the strong product of a complete graph with a \po~graph, say $H'$,
and can thus be obtained by applying a sequence of true twin additions to vertices of $H'$.
Applying Proposition~\ref{prop:operations} and Corollary~\ref{cor:connected}, we infer that
$G \boxtimes H$ is $1$-p.o.~in this case.
In the other case, every component of $G$ is $2$-complete and every component of $H$ is co-chain.
By Corollary~\ref{cor:connected}, it suffices to consider the case when $G$ and $H$ are connected, and, moreover,
we may assume by Lemma~\ref{lemm:true-twins} that they are both true-twin-free.
In particular, $G$ is isomorphic to one of $K_1$ and $P_3$, and
by Proposition~\ref{obs:p3}, $H$ is isomorphic to a graph from
the set $\{K_1\}\cup \{R_n, n\geq 1\} \cup \{R_n \ast K_1, n\geq 0\}$.
The fact that $G\boxtimes H$ is \po~now follows from
Proposition~\ref{lemm:orient-rafts}, Corollary~\ref{cor:orient-rafts},
and the fact that \po~graphs are closed under taking induced subgraphs.
\end{proof}

\def\soft#1{\leavevmode\setbox0=\hbox{h}\dimen7=\ht0\advance \dimen7
  by-1ex\relax\if t#1\relax\rlap{\raise.6\dimen7
  \hbox{\kern.3ex\char'47}}#1\relax\else\if T#1\relax
  \rlap{\raise.5\dimen7\hbox{\kern1.3ex\char'47}}#1\relax \else\if
  d#1\relax\rlap{\raise.5\dimen7\hbox{\kern.9ex \char'47}}#1\relax\else\if
  D#1\relax\rlap{\raise.5\dimen7 \hbox{\kern1.4ex\char'47}}#1\relax\else\if
  l#1\relax \rlap{\raise.5\dimen7\hbox{\kern.4ex\char'47}}#1\relax \else\if
  L#1\relax\rlap{\raise.5\dimen7\hbox{\kern.7ex
  \char'47}}#1\relax\else\message{accent \string\soft \space #1 not
  defined!}#1\relax\fi\fi\fi\fi\fi\fi} \def\cprime{$'$}

\end{document}